\DeclareMathAlphabet\gothic{U}{euf}{m}{n}
\def\eqnarray{\stepcounter{equation}\let\@currentlabel=\theequation
\global\@eqnswtrue
\tabskip\@centering\let\\=\@eqncr
$$\halign to \displaywidth\bgroup\hfil\global\@eqcnt\z@
  $\displaystyle\tabskip\z@{##}$&\global\@eqcnt\@ne
  \hfil$\displaystyle{{}##{}}$\hfil
  &\global\@eqcnt\tw@ $\displaystyle{##}$\hfil
  \tabskip\@centering&\llap{##}\tabskip\z@\cr}
\def\endeqnarray{\@@eqncr\egroup
      \global\advance\c@equation\m@ne$$\global\@ignoretrue}
\def\@yeqncr{\@ifnextchar [{\@xeqncr}{\@xeqncr[5pt]}}
\begin{document}
\bibliographystyle{tom}

\newtheorem{lemma}{Lemma}[section]
\newtheorem{thm}[lemma]{Theorem}
\newtheorem{cor}[lemma]{Corollary}
\newtheorem{prop}[lemma]{Proposition}

\theoremstyle{definition}

\newtheorem{remark}[lemma]{Remark}
\newtheorem{exam}[lemma]{Example}
\newtheorem{definition}[lemma]{Definition}

\newcommand{\gota}{\gothic{a}}
\newcommand{\gotb}{\gothic{b}}
\newcommand{\gotc}{\gothic{c}}
\newcommand{\gote}{\gothic{e}}
\newcommand{\gotf}{\gothic{f}}
\newcommand{\gotg}{\gothic{g}}
\newcommand{\gothh}{\gothic{h}}
\newcommand{\gotk}{\gothic{k}}
\newcommand{\gotm}{\gothic{m}}
\newcommand{\gotn}{\gothic{n}}
\newcommand{\gotp}{\gothic{p}}
\newcommand{\gotq}{\gothic{q}}
\newcommand{\gotr}{\gothic{r}}
\newcommand{\gots}{\gothic{s}}
\newcommand{\gott}{\gothic{t}}
\newcommand{\gotu}{\gothic{u}}
\newcommand{\gotv}{\gothic{v}}
\newcommand{\gotw}{\gothic{w}}
\newcommand{\gotz}{\gothic{z}}
\newcommand{\gotA}{\gothic{A}}
\newcommand{\gotB}{\gothic{B}}
\newcommand{\gotG}{\gothic{G}}
\newcommand{\gotL}{\gothic{L}}
\newcommand{\gotS}{\gothic{S}}
\newcommand{\gotT}{\gothic{T}}

\newcounter{teller}
\renewcommand{\theteller}{(\alph{teller})}
\newenvironment{tabel}{\begin{list}%
{\rm  (\alph{teller})\hfill}{\usecounter{teller} \leftmargin=1.1cm
\labelwidth=1.1cm \labelsep=0cm \parsep=0cm}
                      }{\end{list}}

\newcounter{tellerr}
\renewcommand{\thetellerr}{(\roman{tellerr})}
\newenvironment{tabeleq}{\begin{list}%
{\rm  (\roman{tellerr})\hfill}{\usecounter{tellerr} \leftmargin=1.1cm
\labelwidth=1.1cm \labelsep=0cm \parsep=0cm}
                         }{\end{list}}

\newcounter{tellerrr}
\renewcommand{\thetellerrr}{(\Roman{tellerrr})}
\newenvironment{tabelR}{\begin{list}%
{\rm  (\Roman{tellerrr})\hfill}{\usecounter{tellerrr} \leftmargin=1.1cm
\labelwidth=1.1cm \labelsep=0cm \parsep=0cm}
                         }{\end{list}}

\newcounter{proofstep}
\newcommand{\nextstep}{\refstepcounter{proofstep}\vertspace \par 
          \noindent{\bf Step \theproofstep} \hspace{5pt}}
\newcommand{\firststep}{\setcounter{proofstep}{0}\nextstep}

\newcommand{\Ni}{\mathds{N}}
\newcommand{\Qi}{\mathds{Q}}
\newcommand{\Ri}{\mathds{R}}
\newcommand{\Ci}{\mathds{C}}
\newcommand{\Ti}{\mathds{T}}
\newcommand{\Zi}{\mathds{Z}}
\newcommand{\Fi}{\mathds{F}}
\newcommand{\Ki}{\mathds{K}}

\renewcommand{\proofname}{{\bf Proof}}

\newcommand{\vertspace}{\vskip10.0pt plus 4.0pt minus 6.0pt}

\newcommand{\simh}{{\stackrel{{\rm cap}}{\sim}}}
\newcommand{\ad}{{\mathop{\rm ad}}}
\newcommand{\Ad}{{\mathop{\rm Ad}}}
\newcommand{\alg}{{\mathop{\rm alg}}}
\newcommand{\clalg}{{\mathop{\overline{\rm alg}}}}
\newcommand{\Aut}{\mathop{\rm Aut}}
\newcommand{\arccot}{\mathop{\rm arccot}}
\newcommand{\capp}{{\mathop{\rm cap}}}
\newcommand{\rcapp}{{\mathop{\rm rcap}}}
\newcommand{\diam}{\mathop{\rm diam}}
\newcommand{\divv}{\mathop{\rm div}}
\newcommand{\dom}{\mathop{\rm dom}}
\newcommand{\codim}{\mathop{\rm codim}}
\newcommand{\RRe}{\mathop{\rm Re}}
\newcommand{\IIm}{\mathop{\rm Im}}
\newcommand{\tr}{{\mathop{\rm Tr \,}}}
\newcommand{\Tr}{{\mathop{\rm Tr \,}}}
\newcommand{\Vol}{{\mathop{\rm Vol}}}
\newcommand{\card}{{\mathop{\rm card}}}
\newcommand{\rank}{\mathop{\rm rank}}
\newcommand{\supp}{\mathop{\rm supp}}
\newcommand{\sgn}{\mathop{\rm sgn}}
\newcommand{\essinf}{\mathop{\rm ess\,inf}}
\newcommand{\esssup}{\mathop{\rm ess\,sup}}
\newcommand{\Int}{\mathop{\rm Int}}
\newcommand{\lcm}{\mathop{\rm lcm}}
\newcommand{\loc}{{\rm loc}}
\newcommand{\HS}{{\rm HS}}
\newcommand{\Trn}{{\rm Tr}}
\newcommand{\n}{{\rm N}}
\newcommand{\WOT}{{\rm WOT}}

\newcommand{\at}{@}

\newcommand{\mod}{\mathop{\rm mod}}
\newcommand{\spann}{\mathop{\rm span}}
\newcommand{\one}{\mathds{1}}

\hyphenation{groups}
\hyphenation{unitary}

\newcommand{\tfrac}[2]{{\textstyle \frac{#1}{#2}}}

\newcommand{\ca}{{\cal A}}
\newcommand{\cb}{{\cal B}}
\newcommand{\cc}{{\cal C}}
\newcommand{\cd}{{\cal D}}
\newcommand{\ce}{{\cal E}}
\newcommand{\cf}{{\cal F}}
\newcommand{\ch}{{\cal H}}
\newcommand{\chs}{{\cal HS}}
\newcommand{\ci}{{\cal I}}
\newcommand{\ck}{{\cal K}}
\newcommand{\cl}{{\cal L}}
\newcommand{\cm}{{\cal M}}
\newcommand{\cn}{{\cal N}}
\newcommand{\co}{{\cal O}}
\newcommand{\cp}{{\cal P}}
\newcommand{\cs}{{\cal S}}
\newcommand{\ct}{{\cal T}}
\newcommand{\cx}{{\cal X}}
\newcommand{\cy}{{\cal Y}}
\newcommand{\cz}{{\cal Z}}

\thispagestyle{empty}

\vspace*{1cm}
\begin{center}
{\Large\bf Operators with continuous kernels} \\[10mm]

\large W. Arendt$^1$ and A.F.M. ter Elst$^2$

\end{center}

\vspace{5mm}

\begin{center}
{\bf Abstract}
\end{center}

\begin{list}{}{\leftmargin=1.7cm \rightmargin=1.7cm \listparindent=10mm 
   \parsep=0pt}
\item
Let $\Omega \subset \Ri^d$ be open.
We investigate conditions under which an operator $T$ on $L_2(\Omega)$
has a continuous kernel $K \in C(\overline \Omega \times \overline \Omega)$.
In the centre of our interest is the condition 
$T L_2(\Omega) \subset C(\overline \Omega)$, which one 
knows for many semigroups generated by elliptic operators.
This condition implies that $T^3$ has a kernel in 
$C(\overline \Omega \times \overline \Omega)$ if $T$ is self-adjoint
and $\Omega$ is bounded, and the power $3$ is best possible.
We also analyse Mercer's theorem in our context.
\end{list}

\vspace{1cm}
\noindent
February 2019

\vspace{5mm}
\noindent
Mathematics Subject Classification: 47G10, 47B10.

\vspace{5mm}
\noindent
Keywords: Intergral operator, continuous kernel, Mercer's theorem.

\vspace{15mm}

\noindent
{\bf Home institutions:}    \\[3mm]
\begin{tabular}{@{}cl@{\hspace{10mm}}cl}
1. & Institute of Applied Analysis  & 
  2. & Department of Mathematics   \\
& University of Ulm   & 
  & University of Auckland   \\
& Helmholtzstr.\ 18 & 
  & Private Bag 92019  \\
& 89081 Ulm & 
  & Auckland 1142 \\
& Germany  & 
  & New Zealand  
\end{tabular}

\newpage

\section{Introduction} \label{Skernel1}

Kernel operators play an important role in analysis.
For example, the kernels of diffusion semigroups (heat kernels)
are of considerable interest to analyse the evolution (see Davies \cite{Dav2}
and Ouhabaz \cite{Ouh5}).
Let $\Omega \subset \Ri^d$ be an open bounded set and $T \in \cl(L_2(\Omega))$.
In many cases one is able to prove that $T$ has a measurable kernel
via the Dunford--Pettis criterion or by showing that $T$ is Hilbert--Schmidt.
But then it is frequently not easy to decide whether the kernel is continuous.
The results in the literature mainly establish stronger results such as 
H\"older continuity under quite strong hypotheses.
But just continuity is important.
For example, it is required for the trace formula in the context of Mercer's theorem.
A property which is frequently obtained automatically for semigroups
or resolvents, is that the operator maps $L_2(\Omega)$ into $C(\overline \Omega)$.
It is this property that we investigate in the present paper.
One of our main results is the following.

\begin{thm} \label{tkernel101}
Let $\Omega \subset \Ri^d$ be open and bounded.
Let $T$ be a self-adjoint bounded operator on $L_2(\Omega)$ such that 
$T L_2(\Omega) \subset C(\overline \Omega)$.
Then $T^3$ has a kernel in $C(\overline \Omega \times \overline \Omega)$.
\end{thm}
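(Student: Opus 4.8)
The plan is to produce a spectral (Mercer-type) series for $T^3$ and to show that this series converges uniformly on $\overline\Omega\times\overline\Omega$, forcing its sum to be continuous.

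First I would upgrade the hypothesis $T L_2(\Omega)\subset C(\overline\Omega)$ to a quantitative bound by the closed graph theorem, viewing $T$ as a map $L_2(\Omega)\to C(\overline\Omega)$: if $f_n\to f$ in $L_2$ and $Tf_n\to g$ uniformly, then $Tf_n\to Tf$ in $L_2$ (since $T$ is $L_2$-bounded), while uniform convergence forces $Tf_n\to g$ in $L_2$ (as $\Omega$ is bounded), so $g=Tf$. Hence there is a $C>0$ with $\|Tf\|_\infty\le C\|f\|_2$. For each $x\in\overline\Omega$ the functional $f\mapsto(Tf)(x)$ is then bounded, and Riesz representation yields $\phi_x\in L_2(\Omega)$ with $\|\phi_x\|_2\le C$ and $(Tf)(x)=\langle f,\phi_x\rangle$ for all $f$. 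Setting $K(x,y):=\overline{\phi_x(y)}$ gives a kernel for $T$ with $\int_\Omega|K(x,y)|^2\,dy=\|\phi_x\|_2^2\le C^2$ for every $x$, so $K\in L_2(\Omega\times\Omega)$ and $T$ is Hilbert--Schmidt, in particular compact.

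Next, since $T$ is compact and self-adjoint, the spectral theorem provides an orthonormal system $(e_n)$ of eigenfunctions with $Te_n=\lambda_n e_n$, the $\lambda_n$ real and $\lambda_n\to 0$, spanning the orthogonal complement of $\ker T$. Each $e_n$ with $\lambda_n\ne0$ lies in $C(\overline\Omega)$, since $e_n=\lambda_n^{-1}Te_n\in T L_2(\Omega)\subset C(\overline\Omega)$. Evaluating $(Te_n)(x)=\langle e_n,\phi_x\rangle$ gives $\lambda_n e_n(x)=\langle e_n,\phi_x\rangle$, and Bessel's inequality applied to $\phi_x$ yields the crucial uniform estimate
$$\sum_n \lambda_n^2\,|e_n(x)|^2=\sum_n|\langle\phi_x,e_n\rangle|^2\le\|\phi_x\|_2^2\le C^2\qquad(x\in\overline\Omega).$$
Since $T^3=\sum_n\lambda_n^3\langle\,\cdot\,,e_n\rangle e_n$, the candidate kernel is $K_3(x,y)=\sum_n\lambda_n^3\,e_n(x)\,\overline{e_n(y)}$. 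For the tail I would estimate, using Cauchy--Schwarz and the displayed bound,
$$\sum_{n>N}\bigl|\lambda_n^3 e_n(x)\overline{e_n(y)}\bigr|\le\Bigl(\sup_{n>N}|\lambda_n|\Bigr)\sum_{n>N}|\lambda_n e_n(x)|\,|\lambda_n e_n(y)|\le\Bigl(\sup_{n>N}|\lambda_n|\Bigr)C^2,$$
which tends to $0$ uniformly in $(x,y)$ because $\lambda_n\to0$. Thus the series defining $K_3$ converges uniformly on $\overline\Omega\times\overline\Omega$; each partial sum is continuous (a finite combination of products of continuous eigenfunctions), so $K_3\in C(\overline\Omega\times\overline\Omega)$, and a short computation justified by the uniform convergence shows that $K_3$ is a kernel for $T^3$.

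The step I expect to be the main obstacle is not the final series manipulation but the passage from the qualitative mapping property to a workable quantitative handle, namely producing the uniform bound $\sum_n\lambda_n^2|e_n(x)|^2\le C^2$ (via closed graph, Riesz, and Hilbert--Schmidt-ness). Once this is secured, the spare factor $\lambda_n\to0$ supplied by the \emph{third} power is precisely what converts pointwise Cauchy--Schwarz control into uniform convergence. This also makes transparent why the exponent $3$ is optimal: for $T^2$ the analogous tail is $\sum_{n>N}|\lambda_n e_n(x)|\,|\lambda_n e_n(y)|$, which carries no spare factor of $\lambda_n$ and need not be uniformly small.
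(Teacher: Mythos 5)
Your proof is correct, but it takes a genuinely different route from the paper's. The paper never invokes the spectral theorem for this result: it writes $T^3=T_3\,T_2\,T_1^*$ with $T_1=T_2=T_3=T$ (self-adjointness enters only in this identification), notes that the middle factor is Hilbert--Schmidt and hence compact on $L_2(\Omega)$ (Corollary~\ref{ckernel232}), so that $T_1\,T_2^*\colon L_2(\Omega)\to C(\overline\Omega)$ is compact, and then obtains joint continuity of the kernel of $T_3\,(T_1\,T_2^*)^*$ from a sequential weak-convergence argument with the Riesz vectors $k_x$ (Proposition~\ref{pkernel201}\ref{pkernel201-2}, Theorem~\ref{tkernel202}, Corollary~\ref{ckernel204.4}). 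That factorization machinery buys generality: the three factors may be distinct and non-self-adjoint, and the same argument runs on unbounded $\Omega$ and on intermediate sets $\Omega\subset X\subset\overline\Omega$. Your argument instead exploits self-adjointness essentially, through the eigenexpansion: Bessel's inequality applied to $\phi_x$ gives the uniform bound $\sum_n\lambda_n^2|e_n(x)|^2\le C^2$, and the spare factor $\sup_{n>N}|\lambda_n|\to0$ supplied by the third power converts this into uniform convergence of the Mercer-type series. This is closer in spirit to the paper's Proposition~\ref{pkernel308} and Theorem~\ref{tkernel502}; what it buys is an explicit eigenfunction expansion of the kernel of $T^3$ and a transparent view of why the cube works where the square fails --- though your closing remark is only a heuristic for optimality, the actual counterexample for $T^2$ being the paper's Example~\ref{xkernel203}.

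One small repair is needed. Concluding that $T$ is Hilbert--Schmidt from ``$K(x,y)=\overline{\phi_x(y)}$ lies in $L_2(\Omega\times\Omega)$'' presupposes joint measurability of $(x,y)\mapsto\phi_x(y)$, which is not automatic for an arbitrary choice of representatives of the classes $\phi_x$. It is cleaner to argue as in Corollary~\ref{ckernel232}: for an orthonormal basis $(e_n)$ one has
\[
\sum_{n=1}^\infty\|Te_n\|_2^2
=\int_\Omega\sum_{n=1}^\infty|(Te_n)(x)|^2\,dx
=\int_\Omega\|\phi_x\|_2^2\,dx
\le C^2\,|\Omega|<\infty ,
\]
where measurability of the integrand is clear because each $Te_n$ is continuous. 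Nothing else in your proof uses this $K$, so the remainder stands unchanged.
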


Of course, to say that $K \in C(\overline \Omega \times \overline \Omega)$
is a kernel of $T^3$ means that 
\[
(T^3 u)(x) 
= \int_\Omega K(x,y) \, u(y) \, dy
\]
for all $u \in L_2(\Omega)$ and $x \in \overline \Omega$.

We show by an example that $T^2$ does not need to have a kernel 
in $C(\Omega \times \Omega)$, 
even if $T$ is positive (in the sense of Hilbert spaces).
The optimal power $3$ demands some particular efforts.
In a previous paper \cite{AE8} we proved that 
$T^4$ has a kernel in $C(\overline \Omega \times \overline \Omega)$.
If $T$ is a positive operator on $L_2(\Omega)$ such that 
$T L_2(\Omega) \subset C(\overline \Omega)$, 
then we shall show that $T^{2+\varepsilon}$ 
has a kernel in $C(\overline \Omega \times \overline \Omega)$
for all $\varepsilon > 0$
(and this is optimal by what we said above).
Conversely, if a positive operator $T$ has a kernel in 
$C(\overline \Omega \times \overline \Omega)$, then 
Mercer's theorem shows that $T$ is trace class 
and we shall show that $T^{1/2} L_2(\Omega) \subset C(\overline \Omega)$
and that this result is optimal.

We also present results on unbounded domains.
Here our arguments give a nice result for a semigroup $S$ on $L_2(\Ri^d)$
which has Gaussian bounds.
If both $S_t L_2(\Ri^d) \subset C(\Ri^d)$ and 
$S_t^* L_2(\Ri^d) \subset C(\Ri^d)$ for all $t > 0$, then 
$S_t$ has a continuous kernel.
Finally we present a version of Mercer's theorem which is more 
general than the classical result and which fits well with our results.
In the last section examples are given.

\section{Continuous kernels, general $\Omega$} \label{Skernel2}

Let $\Omega \subset \Ri^d$ be open, non-empty and let $X$ be a set such that 
$\Omega \subset X \subset \overline \Omega$.
We provide $C(X)$ with the Fr\'echet topology
of uniform convergence on compact subsets of $X$.
Then an operator $T \colon L_2(\Omega) \to C(X)$ is 
compact if and only if for every sequence $(u_n)_{n \in \Ni}$ in $L_2(\Omega)$
such that $\lim_{n \to \infty} u_n = 0$ weakly in $L_2(\Omega)$ it follows 
that $\lim_{n \to \infty} \sup_{x \in F} |(T u_n)(x)| = 0$ for all non-empty
compact $F \subset X$.

\begin{prop} \label{pkernel201}
Let $\Omega \subset \Ri^d$ be open, non-empty and let $X$ be a set such that 
$\Omega \subset X \subset \overline \Omega$.
Let $T_1,T_2 \in \cl(L_2(\Omega))$.
Suppose that $T_1 L_2(\Omega) \subset C(X)$ and
$T_2 L_2(\Omega) \subset C(X)$.
Then the following are valid.
\begin{tabel}
\item \label{pkernel201-1}
There exists a measurable, separately continuous 
function $K \colon X \times X \to \Ci$
such that $K$ is bounded on compact subsets of $X \times X$,
the function $K(x,\cdot) \in L_2(\Omega)$ for all $x \in X$ and 
\[
(T_2 \, T_1^* u)(x)
= \int_\Omega K(x,y) \, u(y) \, dy
\]
for all $u \in L_2(\Omega)$ and $x \in X$.
\item \label{pkernel201-2}
If in addition $T_1 \colon L_2(\Omega) \to C(X)$ is compact
or $T_2 \colon L_2(\Omega) \to C(X)$ is compact,
then the kernel $K$ in Statement~\ref{pkernel201-1} is continuous.
\end{tabel}
\end{prop}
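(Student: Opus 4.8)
The plan is to realise $T_1$ and $T_2$ as continuous maps into the Fréchet space $C(X)$ and then build the kernel from their Riesz representatives. First I would note that $T_i \colon L_2(\Omega) \to C(X)$ is continuous for $i \in \{1,2\}$: since $T_i$ is bounded on $L_2(\Omega)$ and uniform convergence on a compact $F \subset \Omega$ dominates $L_2(F)$-convergence, any limit in $C(X)$ of $T_i u_n$ must agree with $T_i u$ on the dense subset $\Omega$ of $X$, so the graph of $T_i$ into $C(X)$ is closed and the closed graph theorem applies. Consequently, for each $x \in X$ the evaluation $u \mapsto (T_i u)(x)$ is a continuous linear functional on $L_2(\Omega)$, and the Riesz representation theorem yields $g_x^{(i)} \in L_2(\Omega)$ with $(T_i u)(x) = (u, g_x^{(i)})$ for all $u$, where $(\cdot\,,\cdot)$ is the $L_2(\Omega)$ inner product, linear in its first argument.

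A short computation then produces the kernel. Writing $(T_2 T_1^* u)(x) = (T_1^* u, g_x^{(2)}) = (u, T_1 g_x^{(2)})$ and using $(T_1 g_x^{(2)})(y) = (g_x^{(2)}, g_y^{(1)})$, I set
\[
K(x,y) = (g_y^{(1)}, g_x^{(2)}),
\]
so that $(T_2 T_1^* u)(x) = \int_\Omega K(x,y)\,u(y)\,dy$ and $K(x,\cdot) = \overline{T_1 g_x^{(2)}} \in C(X) \cap L_2(\Omega)$. Separate continuity is now automatic: for fixed $x$ the map $y \mapsto K(x,y)$ equals $\overline{T_1 g_x^{(2)}} \in C(X)$, and for fixed $y$ the map $x \mapsto K(x,y) = \overline{(T_2 g_y^{(1)})(x)} \in C(X)$. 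For boundedness on compacts I would use $|K(x,y)| \le \|g_y^{(1)}\|\,\|g_x^{(2)}\|$ together with the uniform boundedness principle: on a compact $F \subset X$ the representatives are pointwise bounded since each $x \mapsto (T_i h)(x) = (h, g_x^{(i)})$ is continuous, whence $\sup_{x \in F}\|g_x^{(i)}\| < \infty$. Measurability then follows because separately continuous functions on the separable metric space $X \times X$ are Borel measurable. This settles Statement~\ref{pkernel201-1}.

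For Statement~\ref{pkernel201-2} the essential point is to upgrade the (always valid) weak continuity of $x \mapsto g_x^{(i)}$ to norm continuity under the compactness hypothesis. Assume $T_1 \colon L_2(\Omega) \to C(X)$ is compact and take $y_n \to y$ in $X$. The set $B = \{T_1 u : \|u\|_{L_2} \le 1\}$ is relatively compact in $C(X)$, so its restriction to the compact set $F_0 = \{y\} \cup \{y_n : n \in \Ni\}$ is relatively compact in $C(F_0)$; by Arzelà--Ascoli this family is equicontinuous at $y$, which is exactly $\sup_{\|u\| \le 1} |(T_1 u)(y_n) - (T_1 u)(y)| \to 0$, i.e. $\|g_{y_n}^{(1)} - g_y^{(1)}\|_{L_2} \to 0$. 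To conclude joint continuity at $(x,y)$ I would split
\[
K(x_n,y_n) - K(x,y) = (g_{y_n}^{(1)} - g_y^{(1)}, g_{x_n}^{(2)}) + (g_y^{(1)}, g_{x_n}^{(2)} - g_x^{(2)}),
\]
where the first term tends to $0$ because $g_{y_n}^{(1)} \to g_y^{(1)}$ in norm while $\|g_{x_n}^{(2)}\|$ stays bounded, and the second equals $\overline{(T_2 g_y^{(1)})(x_n)} - \overline{(T_2 g_y^{(1)})(x)} \to 0$ by continuity of $T_2 g_y^{(1)} \in C(X)$. The case in which $T_2$ is compact is symmetric.

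I expect the main obstacle to be precisely this compactness step: one must convert compactness of $T_1$ into $C(X)$ --- a Fréchet space carrying the topology of uniform convergence on compacta, with $X$ possibly not locally compact --- into norm convergence of the representatives. The device that makes this work is to apply Arzelà--Ascoli on the ad hoc compact set $F_0 = \{y\} \cup \{y_n\}$ rather than on a neighbourhood of $y$, together with the identity $\|g_{y_n}^{(1)} - g_y^{(1)}\| = \sup_{\|u\| \le 1}|(T_1 u)(y_n) - (T_1 u)(y)|$, which is the bridge turning equicontinuity into the required convergence.
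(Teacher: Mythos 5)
Your proposal is correct, and Statement \ref{pkernel201-1} is handled essentially as in the paper: boundedness of the point evaluations via a closed graph argument, Riesz representatives $g_x^{(i)}$, the kernel $K(x,y)=(g_y^{(1)},g_x^{(2)})$, and the same computation $(T_2\,T_1^*u)(x)=(u,T_1g_x^{(2)})=\int_\Omega K(x,y)\,u(y)\,dy$ with $K(x,\cdot)=\overline{T_1g_x^{(2)}}$. For Statement \ref{pkernel201-2}, however, you run a genuinely different mechanism. The paper (taking $T_2$ compact) invokes the sequential characterisation of compactness of an operator into $C(X)$ stated at the beginning of Section~\ref{Skernel2}: since $k^{(1)}_{y_n}\to k^{(1)}_y$ weakly, $T_2k^{(1)}_{y_n}\to T_2k^{(1)}_y$ uniformly on the compact set $\{x,x_1,x_2,\ldots\}$, and a triangle inequality on the values $(T_2k^{(1)}_{y_n})(x_n)$ finishes the proof. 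You (taking $T_1$ compact) instead upgrade the weak continuity of $y\mapsto g^{(1)}_y$ to \emph{norm} continuity, by applying Arzel\`a--Ascoli to the image of the unit ball restricted to the ad hoc compact set $\{y\}\cup\{y_n:n\in\Ni\}$ and using the identity $\|g^{(1)}_{y_n}-g^{(1)}_y\|=\sup_{\|u\|\leq1}|(T_1u)(y_n)-(T_1u)(y)|$; Cauchy--Schwarz together with the local boundedness of $x\mapsto\|g^{(2)}_x\|$ then gives joint continuity. Your route is self-contained from the standard definition of a compact operator (it does not presuppose the weak-to-uniform criterion, which the paper states without proof), and as a by-product it establishes norm continuity of the representative map, in effect the implication \ref{ckernel233-1}$\Rightarrow$\ref{ckernel233-3} of Corollary~\ref{ckernel233} in the general-$X$ setting; the paper's route is shorter once its criterion is granted.

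Two blemishes, neither fatal. First, with your convention $(T_iu)(x)=(u,g^{(i)}_x)$ one has $K(\cdot,y)=T_2g^{(1)}_y$ exactly, so the complex conjugations you place on $(T_2g^{(1)}_y)(x_n)$ and $(T_2g^{(1)}_y)(x)$ are spurious; this is harmless, since conjugation does not affect continuity or the limits. Second, the closed graph theorem applied with target $C(X)$ requires $C(X)$ to be a Fr\'echet space, i.e.\ $X$ hemicompact; this holds for $X=\Omega$ or $X=\overline\Omega$ but can fail for intermediate $X$ (e.g.\ $\Omega$ together with a countable dense subset of $\partial\Omega$). The paper shares this convention, and the issue is avoided by applying the closed graph theorem (or Banach--Steinhaus on point evaluations) to $u\mapsto(T_iu)|_F\in C(F)$ for compact $F\subset X$, exactly as the paper's proof does; your argument goes through unchanged after this localisation.
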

\begin{proof}
`\ref{pkernel201-1}'.
If $F \subset X$ is compact, then the operator $u \mapsto (T_1 u)|_F$ is 
bounded from $L_2(\Omega)$ into $C(F)$ by the closed graph theorem.
Hence it follows
from the Riesz representation theorem that for all $x \in X$ 
there exists a $k^{(1)}_x \in \cl_2(\Omega)$ such that 
\[
(T_1 u)(x)
= (u, k_x^{(1)})_{L_2(\Omega)}
\]
for all $u \in L_2(\Omega)$.
Then $x \mapsto k_x^{(1)}$ is bounded from compact subsets of
$X$ into $L_2(\Omega)$.
Clearly the map $x \mapsto k^{(1)}_x$ is continuous from $X$ 
into $(L_2(\Omega),w)$, the space $L_2(\Omega)$ provided with the weak topology.
We can define similarly the functions $k^{(2)}_x$
with respect to $T_2$.
Define $K \colon X \times X \to \Ci$ by
\begin{equation}
K(x,y)
= (k^{(1)}_y, k^{(2)}_x)_{L_2(\Omega)}
.  
\label{epkernel201;4}
\end{equation}
Then $K$ is bounded on compact subsets of $X \times X$ and separately continuous.
Hence $K$ is measurable by \cite{AliprantisBorder} Lemma~4.51.
If $u \in L_2(\Omega)$ and $x \in X$, then 
\begin{eqnarray*}
(T_2 \, T_1^* u)(x)
& = & (T_1^* u, k^{(2)}_x)_{L_2(\Omega)}
= (u, T_1 k^{(2)}_x)_{L_2(\Omega)}
= \int_\Omega u(y) \, \overline{ (T_1 k^{(2)}_x)(y) } \, dy  \\
& = & \int_\Omega K(x,y) \, u(y) \, dy
\end{eqnarray*}
since 
$(T_1 k^{(2)}_x)(y)
= (k^{(2)}_x, k^{(1)}_y)_{L_2(\Omega)} 
= \overline{K(x,y)}$ for all $y \in \Omega$.
Moreover, $K(x,\cdot) = \overline{T_1 k^{(2)}_x} \in L_2(\Omega)$.

`\ref{pkernel201-2}'.
Suppose that the operator $T_2 \colon L_2(\Omega) \to C(X)$ is compact.
(The proof for $T_1$ is similar.)
Let $x,x_1,x_2,\ldots,y,y_1,y_2,\ldots \in X$ and 
suppose that $\lim x_n = x$ and $\lim y_n = y$ in~$X$.
Let $F = \{ x, x_1,x_2,\ldots \} $.
Then $F$ is compact and $F \subset X$.
Now $\lim k^{(1)}_{y_n} = k^{(1)}_y$ weakly in $L_2(\Omega)$.
Hence by assumption
$\lim_{n \to \infty} T_2 k^{(1)}_{y_n} = T_2 k^{(1)}_y$
uniformly on $F$.
If $n \in \Ni$, then 
\begin{eqnarray*}
|K(x_n,y_n) - K(x,y)|
& = & |(k^{(1)}_{y_n}, k^{(2)}_{x_n})_{L_2(\Omega)} 
     - (k^{(1)}_y, k^{(2)}_x)_{L_2(\Omega)}|  \\
& = & |(T_2 k^{(1)}_{y_n})(x_n) - (T_2 k^{(1)}_y)(x)|  \\
& \leq & |(T_2 k^{(1)}_{y_n})(x_n) - (T_2 k^{(1)}_y)(x_n)| 
   + |(T_2 k^{(1)}_y)(x_n) - (T_2 k^{(1)}_y)(x)|  
\end{eqnarray*}
for all $n \in \Ni$ and the continuity of $K$ follows.
\end{proof}

A special case of Proposition~\ref{pkernel201}\ref{pkernel201-1} has been proved by 
\cite{KLVW} Proposition~3.3, where positivity improving self-adjoint semigroups
given by kernels are investigated.

For completeness we mention the following uniqueness for separately
continuous functions.

\begin{lemma} \label{lkernel240}
Let $\Omega \subset \Ri^d$ be open, non-empty and let $X$ be a set such that 
$\Omega \subset X \subset \overline \Omega$.
Let $K \colon X \times X \to \Ci$ be separately continuous
and suppose that $K = 0$ almost everywhere on $\Omega \times \Omega$.
Then $K = 0$ pointwise on $X \times X$.
\end{lemma}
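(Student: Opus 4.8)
The plan is to reduce everything to the elementary fact that a continuous function on the open set $\Omega$ which vanishes almost everywhere must vanish identically, and then to propagate this from $\Omega$ to $X$ using that $\Omega$ is dense in $X$.

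First I would record the following auxiliary observation, which I use twice. If $f \colon X \to \Ci$ is continuous and $f = 0$ almost everywhere on $\Omega$, then $f = 0$ on all of $X$. Indeed, the set $\{ x \in \Omega : f(x) \neq 0 \}$ is open by continuity of $f|_\Omega$ and has Lebesgue measure zero, hence is empty, since a non-empty open subset of $\Ri^d$ contains a ball and so has positive measure. Thus $f = 0$ on $\Omega$, and because $\Omega \subset X \subset \overline \Omega$ the set $\Omega$ is dense in $X$, so continuity forces $f = 0$ on $X$.

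Next I would exploit the two-dimensional hypothesis via Fubini's theorem. Since $K = 0$ almost everywhere on $\Omega \times \Omega$, there is a Lebesgue null set $N \subset \Omega$ such that for every $x \in \Omega \setminus N$ the slice $y \mapsto K(x,y)$ vanishes for almost every $y \in \Omega$. For each such $x$ this slice $K(x,\cdot)$ is continuous on $X$ by separate continuity, so the auxiliary observation yields $K(x,\cdot) = 0$ on $X$. In particular $K(x,y) = 0$ for all $x \in \Omega \setminus N$ and all $y \in X$.

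Finally I would bootstrap using the other variable. Fix $y \in X$. The function $x \mapsto K(x,y)$ is continuous on $X$, again by separate continuity, and vanishes on $\Omega \setminus N$, hence almost everywhere on $\Omega$. Applying the auxiliary observation once more gives $K(\cdot, y) = 0$ on $X$, and since $y \in X$ was arbitrary we conclude $K = 0$ on $X \times X$. The only point to watch is the passage from ``almost everywhere'' to ``everywhere'', which is exactly where the openness of $\Omega$ (to force the exceptional open set to be empty) and the density of $\Omega$ in $X$ (to reach the boundary points of $X$) enter; beyond keeping the two separate-continuity arguments in the correct order, no genuine obstacle arises.
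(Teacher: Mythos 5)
Your proof is correct and follows exactly the route the paper intends: its entire proof is the single line ``This follows from Fubini's theorem,'' and your argument is precisely the fleshed-out version of that---Fubini to get almost-every-slice vanishing, then separate continuity together with the density of $\Omega$ in $X$ to upgrade from almost everywhere to everywhere in each variable in turn. No gaps; the order of the two slice arguments and the handling of the exceptional null set $N$ are exactly right.
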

\begin{proof}
This follows from Fubini's theorem.
\end{proof}

Hence a separately continuous kernel is unique if it exists.
This means: let $\Omega \subset \Ri^d$ be open, non-empty and let $X$ be a set such that 
$\Omega \subset X \subset \overline \Omega$, let 
$K \colon X \times X \to \Ci$ be separately continuous with $K(x,\cdot) \in L_2(\Omega)$
and $\int_\Omega K(x,y) \, u(y) \, dy = 0$ for almost every $x \in \Omega$ and 
$u \in L_2(\Omega)$, then $K = 0$ pointwise on $X \times X$.

\medskip

Without the additional compactness condition the joint continuity
fails in general.
We next give an example of a bounded set $\Omega$ and a positive operator
$T$ on $L_2(\Omega)$ such that $T L_2(\Omega) \subset C(\overline \Omega)$,
but the kernel of the operator $T^2 = T T^*$ is not (jointly) continuous,
even not on $\Omega \times \Omega$.

\begin{exam} \label{xkernel203}
Choose $\Omega = (-1,1)$.
We first construct an operator $T \in \cl(L_2(\Omega))$ such that 
$T L_2(\Omega) \subset C(\overline \Omega)$ and 
$T^* L_2(\Omega) \subset C(\overline \Omega)$, but 
the kernel of the operator $T T^*$ is not (jointly) continuous, 
since it is not continuous at $(0,0)$.
We then construct a self-adjoint counter-example and finally a 
positive (self-adjoint) counter-example.

\firststep
Fix $\tau \in C_c^\infty( (-1,1) \times (-1,1) )$
such that $0 \leq \tau \leq \one$ and 
$\tau_{[-\frac{1}{2}, \frac{1}{2}] \times [-\frac{1}{2}, \frac{1}{2}]} = \one$.
Define $K \colon [-1,1] \times [-1,1] \to \Ri$ by 
\[
K(x,z) = \sum_{n=1}^\infty 3^n \, \tau(10^n (x - 2^n), 9^n (z - 3^{-n}) )
.  \]
Then $K$ is continuous on $([-1,1] \times [-1,1]) \setminus \{ (0,0) \} $ and 
\[
\supp K
\subset \bigcup_{n=1}^\infty
  \Big(  [2^{-n} - 10^{-n} , 2^{-n} + 10^{-n}] \times [3^{-n} - 9^{-n}, 3^{-n} + 9^{-n}]
  \Big)
.  \]
Moreover,  
\[
\|K\|_{L_2(\Omega \times \Omega)}^2
\leq \sum_{n=1}^\infty 9^n \cdot 2 \cdot 10^{-n} \cdot 2 \cdot 9^{-n}
= 4 \sum_{n=1}^\infty 10^{-n}
< \infty
.  \]
Hence one can define the Hilbert--Schmidt operator $T \colon L_2(\Omega) \to L_2(\Omega)$
by 
\[
(T u)(x) 
= \int_\Omega K(x,z) \, u(z) \, dz
.  \]
We choose $T_1 = T_2 = T$ in Proposition~\ref{pkernel201}.

Define $K^{(2)} \colon \overline \Omega \times \overline \Omega \to \Ri$ by 
\[
K^{(2)}(x,y) 
= \int_\Omega K(x,z) \, K(y,z) \, dz
.  \]
Then $K^{(2)}$ is continuous on 
$(\overline \Omega \times \overline \Omega) \setminus \{ (0,0) \} $.
Moreover,
$(T \, T^* u)(x) = \int_\Omega K^{(2)}(x,y) \, u(y) \, dy$
for all $u \in L_2(\Omega)$ and almost every $x \in \Omega$.
Note that 
\[
K^{(2)}(2^{-n}, 2^{-n}) 
\geq \int_{[3^{-n} - 9^{-n}, 3^{-n} + 9^{-n}]} 
        (3^n)^2 \, \Big( \tau(0, 9^n (z - 3^{-n})) \Big)^2
\geq 1
\]
for all $n \in \Ni$ and $K^{(2)}(0,0) = 0 = \lim_{n \to \infty} K^{(2)}(- 2^{-n}, - 2^{-n})$.
So $K^{(2)}$ is not continuous at $(0,0)$.

Let $u \in L_2(\Omega)$ and $y \in \overline \Omega$.
We shall show that $T u$ is continuous at~$y$.
This is trivial if $y < 0$ and it easily follows from the Lebesgue dominated 
convergence theorem if $y > 0$.
So it remains to show continuity of $T u$ at $0$.
Let $x \in \Omega \setminus \{ 0 \} $.
There is at most one $n \in \Ni$ such that 
$x \in [2^{-n} - 10^{-n} , 2^{-n} + 10^{-n}]$.
Then 
\[
|(T u)(x)| 
\leq \sqrt{2} \cdot 3^n \int_{[3^{-n} - 9^{-n}, 3^{-n} + 9^{-n}]} |u(z)| \, dz
= (f_n , |u|)_{L_2(\Omega)}
,  \]
where $f_n = \sqrt{2} \cdot 3^n \, \one_{[3^{-n} - 9^{-n}, 3^{-n} + 9^{-n}]}$.
Since the family $(f_k)_{k \in \Ni}$ is orthonormal, it follows that 
$\lim_{k \to \infty} (f_k , |u|)_{L_2(\Omega)} = 0$.
Hence $\lim_{x \to 0} (T u)(x) = 0$.
We proved that $T L_2(\Omega) \subset C(\overline \Omega)$.

Finally we show that $T^* L_2(\Omega) \subset C(\overline \Omega)$.
Let $u \in L_2(\Omega)$.
Again it is easy to show continuity on $\overline \Omega \setminus \{ 0 \} $,
so we have to show continuity at~$0$.
Let $x \in \Omega \setminus \{ 0 \} $.
There is at most one $n \in \Ni$ such that 
$x \in [3^{-n} - 9^{-n}, 3^{-n} + 9^{-n}]$.
Then 
\[
|(T^* u)(x)|
\leq \int_\Omega K(z,x) \, |u(z)| \, dz
\leq 3^n \int_{[2^{-n} - 10^{-n} , 2^{-n} + 10^{-n}]} |u(z)| \, dz
\leq 3^n \, \sqrt{2 \cdot 10^{-n}} \, \|u\|_2
.  \]
So $\lim_{x \to 0} (T^* u)(x) = 0$.
Hence $T^* L_2(\Omega) \subset C(\overline \Omega)$.

\nextstep
Define $\widehat T = T + T^*$.
Then $\widehat T$ is self-adjoint and $\widehat T L_2(\Omega) \subset C(\overline \Omega)$.
Define $\widetilde K \colon \overline \Omega \times \overline \Omega \to \Ri$ by 
\[
\widetilde K(x,y) 
= \int_\Omega \Big(K(x,z) + K(z,x)\Big) \, \Big(K(y,z) + K(z,y)\Big) \, dz
.  \]
Then $(\widehat T \, \widehat T^* u)(x) = \int_\Omega \widetilde K(x,y) \, u(y) \, dy$
for all $u \in L_2(\Omega)$ and almost every $x \in \Omega$.
As before $\widetilde K(- 2^{-n}, - 2^{-n}) = 0 = \widetilde K(0,0)$ for all $n \in \Ni$.
Also $\widetilde K \geq K^{(2)}$.
So $\widetilde K(2^{-n}, 2^{-n}) \geq K^{(2)}(2^{-n}, 2^{-n}) \geq 1$
for all $n \in \Ni$ and $\widetilde K$ is not continuous.

\nextstep
Define $\widetilde T = |\widehat T|$.
Then $\widetilde T$ is positive and $\widetilde T^2 = \widehat T^2$ does 
not have a continuous kernel on $\overline \Omega \times \overline \Omega$.
Since $\widehat T$ is self-adjoint, there exists a unitary operator 
$U$ such that $|\widehat T| = \widehat T \circ U$.
Then 
$\widetilde T L_2(\Omega) 
= \widehat T (U L_2(\Omega)) 
= \widehat T L_2(\Omega)
\subset C(\overline \Omega)$ 
as required.
\end{exam}

For three operators and a compactness condition we next deduce
joint continuity of the kernel on $\overline \Omega \times \overline \Omega$,
even if $\Omega$ is unbounded.

\begin{thm} \label{tkernel202}
Let $\Omega \subset \Ri^d$ be open, non-empty and let $X$ be a set such that 
$\Omega \subset X \subset \overline \Omega$.
Let $T_1,T_2,T_3 \in \cl(L_2(\Omega))$.
Suppose that $T_1 L_2(\Omega) \subset C(X)$
and $T_3 L_2(\Omega) \subset C(X)$.
Moreover, suppose that $T_2$ is a compact operator from $L_2(\Omega)$ into $L_2(\Omega)$.
Then there exists a kernel $K \in C(X \times X)$ 
such that $K(x,\cdot) \in L_2(\Omega)$ for all $x \in X$ and
\[
(T_3 \, T_2 \, T_1^* u)(x)
= \int_\Omega K(x,y) \, u(y) \, dy
\]
for all $u \in L_2(\Omega)$ and $x \in X$.
\end{thm}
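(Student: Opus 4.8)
The plan is to reduce everything to Proposition~\ref{pkernel201} by means of the regrouping
\[
T_3 \, T_2 \, T_1^* = (T_3 \, T_2) \, T_1^* ,
\]
applying that proposition with $T_1$ and $T_3 \, T_2$ in the roles of its two operators. First I would verify the hypotheses of Proposition~\ref{pkernel201}\ref{pkernel201-1}. By assumption $T_1 L_2(\Omega) \subset C(X)$, and since $T_2 \in \cl(L_2(\Omega))$ maps $L_2(\Omega)$ into itself one has $T_3 \, T_2 \, L_2(\Omega) \subset T_3 L_2(\Omega) \subset C(X)$. Thus Proposition~\ref{pkernel201}\ref{pkernel201-1} already produces a measurable, separately continuous kernel $K$ with $K(x,\cdot) \in L_2(\Omega)$ for all $x \in X$ and with the asserted integral representation of $(T_3 \, T_2 \, T_1^*) u$. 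What remains is to upgrade separate continuity to joint continuity.

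For that I would invoke Proposition~\ref{pkernel201}\ref{pkernel201-2}, so the substantive point is to check that $T_3 \, T_2 \colon L_2(\Omega) \to C(X)$ is compact in the sense of the paragraph preceding Proposition~\ref{pkernel201}. The key observation is that, although $T_2$ is assumed compact only as an operator on $L_2(\Omega)$, post-composing with $T_3$ turns it into a compact map into $C(X)$. Concretely, let $(u_n)_{n \in \Ni}$ be a sequence in $L_2(\Omega)$ with $\lim_{n \to \infty} u_n = 0$ weakly in $L_2(\Omega)$. Since a compact operator on a Hilbert space carries weakly null sequences to norm-null sequences, compactness of $T_2$ gives $\lim_{n \to \infty} \|T_2 u_n\|_{L_2(\Omega)} = 0$. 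On the other hand, for each non-empty compact $F \subset X$ the restriction map $u \mapsto (T_3 u)|_F$ is bounded from $L_2(\Omega)$ into $C(F)$ by the closed graph theorem, exactly as in the proof of Proposition~\ref{pkernel201}. Combining these,
\[
\sup_{x \in F} |(T_3 \, T_2 \, u_n)(x)| \leq \|T_3\|_{L_2(\Omega) \to C(F)} \, \|T_2 u_n\|_{L_2(\Omega)} \longrightarrow 0 ,
\]
which is precisely the required compactness of $T_3 \, T_2 \colon L_2(\Omega) \to C(X)$.

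With this compactness established, Proposition~\ref{pkernel201}\ref{pkernel201-2} yields that the kernel $K$ of $(T_3 \, T_2) \, T_1^* = T_3 \, T_2 \, T_1^*$ lies in $C(X \times X)$, while the properties $K(x,\cdot) \in L_2(\Omega)$ and the integral representation were already recorded in the first step. I expect the main obstacle to be exactly the compactness upgrade just described: the entire argument is a reduction to Proposition~\ref{pkernel201}, and its only genuinely new ingredient is the elementary but essential fact that a compact operator into $L_2(\Omega)$ followed by the (closed-graph-)bounded map $T_3 \colon L_2(\Omega) \to C(X)$ is compact into $C(X)$. The one technical point to watch is that compactness into $C(X)$ is defined through uniform convergence on compact subsets $F$; this causes no trouble, since the displayed estimate may be run for each such $F$ separately.
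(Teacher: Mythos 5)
Your proof is correct and takes essentially the same route as the paper: both reduce to Proposition~\ref{pkernel201}\ref{pkernel201-2} by absorbing the compact operator $T_2$ into one of the outer factors, using that a compact operator on $L_2(\Omega)$ followed by a map into $C(X)$ (bounded into $C(F)$ for each compact $F$ by the closed graph theorem) is compact into $C(X)$. The only difference is cosmetic: the paper writes $T_3 \, T_2 \, T_1^* = T_3 \, (T_1 \, T_2^*)^*$ and uses compactness of $T_1 \, T_2^* \colon L_2(\Omega) \to C(X)$, whereas you use the mirror grouping $(T_3 \, T_2) \, T_1^*$, which avoids passing to the adjoint $T_2^*$.
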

\begin{proof}
Note that $T_3 \, T_2 \, T_1^* = T_3 \, (T_1 \, T_2^*)^*$.
Since $T_2^* \colon L_2(\Omega) \to L_2(\Omega)$ is compact, the 
operator $T_1 \, T_2^* \colon L_2(\Omega) \to C(X)$ is compact.
Now it follows from Proposition~\ref{pkernel201}\ref{pkernel201-2}
that $T_3 \, (T_1 \, T_2^*)^*$ has a kernel in 
$C(X \times X)$.
\end{proof}

Finally we present an application for semigroups.
Note that by Proposition~\ref{pkernel201} the hypotheses in the next result
imply that each semigroup operator $S_t$ has a separately continuous kernel.
Under the additional hypothesis of Gaussian bounds we show that this 
kernel is jointly continuous.
Second-order elliptic operators under diverse boundary conditions are known
to generate semigroups with Gaussian bounds
(see \cite{AE1}, \cite{Daners} and \cite{Ouh5}).

\begin{prop} \label{pkernel321}
Let $\Omega \subset \Ri^d$ be open, non-empty and let $X$ be a set such that 
$\Omega \subset X \subset \overline \Omega$.
Let $S$ be a semigroup in $L_2(\Omega)$
such that $S_t L_2(\Omega) \subset C(X)$ and 
$S_t^* L_2(\Omega) \subset C(X)$ for all $t > 0$.
Suppose the semigroup satisfies Gaussian bounds, that is 
there are $b,c,\omega > 0$ such that the separately continuous 
kernel $K_t \colon X \times X \to \Ci$ of $S_t$ 
satisfies
\[
|K_t(x,y)|
\leq c \, t^{-d/2} \, e^{-b |x-y|^2 t^{-1}} \, e^{\omega t}
\]
for all $x,y \in \Omega$ and $t > 0$.
Then $K_t$ is continuous for all $t > 0$.
\end{prop}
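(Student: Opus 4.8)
The plan is to use Proposition~\ref{pkernel201} to produce the separately continuous kernel and then to upgrade separate continuity to joint continuity with the help of the Gaussian bounds. First I would fix $t > 0$ and set $s = t/3$, so that $S_t = S_s \, S_s \, S_s$ by the semigroup law. Combining the Chapman--Kolmogorov identity with the Gaussian bounds and the separate continuity of $K_s$, one checks by dominated convergence that the absolutely convergent triple integral
\[
(x,y) \longmapsto \int_\Omega \int_\Omega K_s(x,z) \, K_s(z,w) \, K_s(w,y) \, dz \, dw
\]
is separately continuous, lies in $L_2(\Omega)$ in each variable and represents $S_t$; by the uniqueness of separately continuous kernels (Lemma~\ref{lkernel240}) it therefore equals $K_t$ pointwise on $X \times X$.

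Next I would introduce a compact approximation. For $R > 0$ let $\one_{B_R}$ denote multiplication by the indicator of $B_R \cap \Omega$, where $B_R = \{ x \in \Ri^d : |x| < R \}$, and set $T_2^{(R)} = \one_{B_R} \, S_s \, \one_{B_R}$. Its kernel is $\one_{B_R}(z) \, K_s(z,w) \, \one_{B_R}(w)$, which by the Gaussian bound is bounded on the finite-measure set $(B_R \cap \Omega)^2$ and vanishes elsewhere, hence lies in $L_2(\Omega \times \Omega)$; thus $T_2^{(R)}$ is Hilbert--Schmidt and in particular compact on $L_2(\Omega)$. Applying Theorem~\ref{tkernel202} with $T_1 = S_s^*$, $T_2 = T_2^{(R)}$ and $T_3 = S_s$ (the hypotheses $S_s^* L_2(\Omega) \subset C(X)$ and $S_s L_2(\Omega) \subset C(X)$ are given), the operator $S_s \, \one_{B_R} \, S_s \, \one_{B_R} \, S_s$ has a \emph{jointly} continuous kernel $K_t^{(R)} \in C(X \times X)$, which by the same uniqueness argument equals the triple integral above with $\one_{B_R}(z) \, \one_{B_R}(w)$ inserted.

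The final and decisive step is to show $K_t^{(R)} \to K_t$ uniformly on compact subsets of $X \times X$, whence $K_t$ is continuous as a local uniform limit of continuous functions. Writing $1 - \one_{B_R}(z) \one_{B_R}(w) = \one_{B_R^c}(z) + \one_{B_R}(z) \one_{B_R^c}(w)$, the difference $K_t - K_t^{(R)}$ splits into two terms in each of which at least one integration variable is forced outside $B_R$. In each term I would apply the Gaussian bounds to all three factors and carry out the free Gaussian integration in the unconstrained variable, which produces another Gaussian of the form $e^{-b'|z-y|^2/s}$; for $(x,y)$ ranging over a fixed compact set with $|x|, |y| \le R_0$ and the remaining variable outside $B_R$, the two surviving exponentials force a distance at least $R - R_0$ and hence a uniform factor $e^{-b''(R-R_0)^2/s}$ times an integrable Gaussian. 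This bound is independent of $(x,y)$ in the compact set and tends to $0$ as $R \to \infty$.

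The main obstacle is precisely this last estimate: separate continuity of $K_s$ only yields weak convergence of the slices $K_s(x_n,\cdot)$ and $K_s(\cdot,y_n)$, so a naive passage to the limit in the Chapman--Kolmogorov integral fails, and it is the Gaussian decay --- converting the off-diagonal tails into a genuinely small, uniformly controlled remainder --- that bridges weak to joint convergence. The cut-off reduces the genuinely non-compact middle factor to a Hilbert--Schmidt one so that Theorem~\ref{tkernel202} applies, while the Gaussian tails guarantee that the resulting error is uniformly negligible on compacta.
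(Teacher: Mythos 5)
Your proof is correct, but it takes a genuinely different and considerably longer route than the paper, and the reason you give for the detour is not actually valid. The paper's proof is precisely the ``naive passage to the limit'' that you dismiss in your closing paragraph: writing $S_{2s} = S_s\,(S_s^*)^*$, Proposition~\ref{pkernel201}\ref{pkernel201-1} gives a separately continuous kernel, the semigroup law gives
\[
K_{2s}(x,y) = \int_\Omega K_s(x,z)\,K_s(z,y)\,dz ,
\]
and joint continuity follows from a single application of dominated convergence. Your objection --- that separate continuity of $K_s$ only yields \emph{weak} convergence of the slices, so that passing to the limit fails --- is mistaken: separate continuity yields \emph{pointwise} convergence $K_s(x_n,z)\to K_s(x,z)$ and $K_s(z,y_n)\to K_s(z,y)$ for every fixed $z\in\Omega$, and since the sequences $(x_n)$ and $(y_n)$ stay in a bounded set, the Gaussian bounds supply an integrable dominating function independent of $n$. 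Hence $K_{2s}(x_n,y_n)\to K_{2s}(x,y)$ directly, with both variables moving at once; no compactness, truncation, or cut-off is needed. In fact, the dominated convergence argument you yourself use in your first step (to get separate continuity of the triple integral) gives joint continuity verbatim when both arguments are allowed to vary simultaneously, so your proof could already end there. That said, the rest of your argument is sound as an alternative: the Hilbert--Schmidt truncation $\one_{B_R} S_s \one_{B_R}$ feeding into Theorem~\ref{tkernel202}, the identification of all the kernels via the uniqueness Lemma~\ref{lkernel240}, and the Gaussian tail estimate (using $|z-w|^2+|w-y|^2\geq \tfrac12|z-y|^2$ for the free integration and $|x-z|\geq R-R_0$ for the decay) giving $K_t^{(R)}\to K_t$ uniformly on compacta are all correct. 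What your route buys is a structural by-product --- $K_t$ is exhibited as a local-uniform limit of the jointly continuous kernels of compactly truncated operators, which shows how Theorem~\ref{tkernel202} can be exploited on unbounded domains --- but what it costs is all of this machinery for a conclusion that the Gaussian domination hands over in one step.
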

\begin{proof}
Since $S_{2t} = S_t \, (S_t^*)^*$ it follows from 
Proposition~\ref{pkernel201}\ref{pkernel201-1} that the operator 
$S_{2t}$ has a separately continuous kernel on 
$\overline \Omega \times \overline \Omega$ for all $t > 0$.
So we may assume that $K_t$ is separately continuous for all $t > 0$.
The semigroup property gives
\begin{equation}
K_{2t}(x,y)
= \int_\Omega K_t(x,z) \, K_t(z,y) \, dy
\label{epkernel321;1}
\end{equation}
for all $x,y \in \Omega$.
Then the Gaussian bounds together with the Lebesgue dominated convergence theorem
first give that (\ref{epkernel321;1}) extends to all $x,y \in X$
and then give the continuity of~$K_{2t}$.
\end{proof}

\section{Continuous kernels, bounded $\Omega$} \label{Skernel3}

Let $\Omega \subset \Ri^d$ be open and bounded.
Then one can easily characterise the operators $T \in \cl(L_2(\Omega))$
which map $L_2(\Omega)$ into $C(\overline \Omega)$.
Note that if $T L_2(\Omega) \subset C(\overline \Omega)$, then the operator 
$T \colon L_2(\Omega) \to C(\overline \Omega)$ is bounded by the closed 
graph theorem.

\begin{prop} \label{pkernel231}
Let $\Omega \subset \Ri^d$ be open and bounded.
Let $T \in \cl(L_2(\Omega))$.
Then the following are equivalent.
\begin{tabeleq} 
\item \label{pkernel231-1}
$T L_2(\Omega) \subset C(\overline \Omega)$.
\item \label{pkernel231-2}
There exists a continuous $k \colon \overline \Omega \to (L_2(\Omega),w)$
such that
\[
(T u)(x) = (f, k(x))_{L_2(\Omega)}
\]
for all $u \in L_2(\Omega)$ and $x \in \overline \Omega$.
\end{tabeleq}
If both conditions are valid, then 
$\|T\|_{L_2(\Omega) \to C(\overline \Omega)}
= \sup_{x \in \overline \Omega} \|k_x\|_{L_2(\Omega)}$.
\end{prop}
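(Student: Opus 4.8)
The plan is to prove the two implications and the norm identity using the closed graph theorem together with the Riesz representation theorem, and then to exploit the fact that the weak topology on the Hilbert space $L_2(\Omega)$ is generated precisely by the functionals $v \mapsto (v,u)_{L_2(\Omega)}$, $u \in L_2(\Omega)$.

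First I would establish `\ref{pkernel231-1}'$\Rightarrow$`\ref{pkernel231-2}'. Assuming $T L_2(\Omega) \subset C(\overline \Omega)$, the operator $T \colon L_2(\Omega) \to C(\overline \Omega)$ is bounded by the closed graph theorem, as already observed before the proposition. Hence for each fixed $x \in \overline \Omega$ the evaluation $u \mapsto (T u)(x)$ is a bounded linear functional on $L_2(\Omega)$, with norm at most $\|T\|_{L_2(\Omega) \to C(\overline \Omega)}$. The Riesz representation theorem then provides a unique $k(x) \in L_2(\Omega)$ with $(T u)(x) = (u, k(x))_{L_2(\Omega)}$ for all $u \in L_2(\Omega)$. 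To verify that $k \colon \overline \Omega \to (L_2(\Omega),w)$ is continuous, I would use that continuity into the weak topology is equivalent to the continuity of $x \mapsto (k(x),u)_{L_2(\Omega)}$ for every $u \in L_2(\Omega)$; but this map is $x \mapsto \overline{(T u)(x)}$, which is continuous because $T u \in C(\overline \Omega)$.

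The converse `\ref{pkernel231-2}'$\Rightarrow$`\ref{pkernel231-1}' is then immediate: if $k$ is weakly continuous, then for each $u \in L_2(\Omega)$ the function $x \mapsto (T u)(x) = (u,k(x))_{L_2(\Omega)}$ is continuous on $\overline \Omega$, so $T u \in C(\overline \Omega)$.

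Finally, for the norm identity I would interchange two suprema and use that the norm of the functional $u \mapsto (u,k(x))_{L_2(\Omega)}$ equals $\|k(x)\|_{L_2(\Omega)}$:
\[
\|T\|_{L_2(\Omega) \to C(\overline \Omega)}
= \sup_{\|u\|_2 \leq 1} \, \sup_{x \in \overline \Omega} |(u,k(x))_{L_2(\Omega)}|
= \sup_{x \in \overline \Omega} \, \sup_{\|u\|_2 \leq 1} |(u,k(x))_{L_2(\Omega)}|
= \sup_{x \in \overline \Omega} \|k(x)\|_{L_2(\Omega)}.
\]
There is no serious obstacle here; the only point demanding care is the correct reading of weak continuity, which reduces the topological condition on $k$ exactly to the pointwise continuity of each $T u$, so that both implications collapse to the same observation.
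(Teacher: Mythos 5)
Your proof is correct and follows exactly the route the paper intends (it leaves this proof to the reader, but your combination of the closed graph theorem, the Riesz representation theorem, and the characterisation of weak continuity via the functionals $v \mapsto (v,u)_{L_2(\Omega)}$ is precisely the argument the authors use for the analogous step in Proposition~\ref{pkernel201}, including the supremum interchange for the norm identity). Note only that the $f$ in Condition~\ref{pkernel231-2} is a typo for $u$, which you have read correctly.
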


We leave the easy proof to the reader.
If $k \colon \overline \Omega \to (L_2(\Omega),w)$ is as in Condition~\ref{pkernel231-2},
then we frequently write $k_x = k(x)$ for all $x \in \overline \Omega$.
For convenience of the reader we include the following.

\begin{cor} \label{ckernel232}
Let $\Omega \subset \Ri^d$ be open and bounded. 
Let $T \in \cl(L_2(\Omega))$ and suppose that $T L_2(\Omega) \subset C(\overline \Omega)$.
Then $T$ is Hilbert--Schmidt and in particular 
$T$ is compact from $L_2(\Omega)$ into $L_2(\Omega)$.
\end{cor}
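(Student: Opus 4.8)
The plan is to show directly that $T$ has finite Hilbert--Schmidt norm; compactness then follows automatically, since every Hilbert--Schmidt operator on a Hilbert space is compact. First I would invoke Proposition~\ref{pkernel231}: because $T L_2(\Omega) \subset C(\overline \Omega)$, there is a weakly continuous map $k \colon \overline \Omega \to (L_2(\Omega),w)$, written $x \mapsto k_x$, with $(T u)(x) = (u, k_x)_{L_2(\Omega)}$ for all $u \in L_2(\Omega)$ and $x \in \overline \Omega$. Crucially, the same proposition gives $M := \sup_{x \in \overline \Omega} \|k_x\|_{L_2(\Omega)} = \|T\|_{L_2(\Omega) \to C(\overline \Omega)}$, and this is finite because the operator $T \colon L_2(\Omega) \to C(\overline \Omega)$ is bounded by the closed graph theorem (as noted just before the proposition).

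Next I would fix an orthonormal basis $(e_n)_{n \in \Ni}$ of $L_2(\Omega)$ and compute
\[
\sum_{n=1}^\infty \|T e_n\|_{L_2(\Omega)}^2
= \sum_{n=1}^\infty \int_\Omega |(T e_n)(x)|^2 \, dx
= \int_\Omega \sum_{n=1}^\infty |(e_n, k_x)_{L_2(\Omega)}|^2 \, dx
= \int_\Omega \|k_x\|_{L_2(\Omega)}^2 \, dx .
\]
Here the interchange of summation and integration is justified by Tonelli's theorem, since all summands are nonnegative and each $x \mapsto |(T e_n)(x)|^2$ is continuous, hence measurable; the final equality is Parseval's identity. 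Because $\Omega$ is bounded, its Lebesgue measure $|\Omega|$ is finite, so the right-hand side is at most $M^2 \, |\Omega| < \infty$. Thus $\sum_{n} \|T e_n\|_{L_2(\Omega)}^2 < \infty$, which is precisely the statement that $T$ is Hilbert--Schmidt, and in particular $T$ is compact from $L_2(\Omega)$ into $L_2(\Omega)$.

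I expect no genuinely hard step in this argument. The two points that require a word of care are the uniform bound $M < \infty$ on the representing family, which Proposition~\ref{pkernel231} supplies directly, and the legitimacy of exchanging the sum with the integral, which is immediate from Tonelli once one observes that the integrand is nonnegative. It is worth emphasising that the boundedness hypothesis on $\Omega$ enters exactly through the finiteness of $|\Omega|$ in the final estimate; without it the integral $\int_\Omega \|k_x\|^2 \, dx$ need not converge even when $M$ is finite.
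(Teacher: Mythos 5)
Your proof is correct and follows essentially the same route as the paper: both use the representation $(Tu)(x) = (u,k_x)_{L_2(\Omega)}$ from Proposition~\ref{pkernel231}, Parseval's identity, and an interchange of sum and integral to compute $\sum_n \|Te_n\|_2^2 = \int_\Omega \|k_x\|_{L_2(\Omega)}^2\,dx < \infty$. Your version merely makes explicit two points the paper leaves implicit, namely the Tonelli justification and the final bound $M^2\,|\Omega|$ using the boundedness of $\Omega$.
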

\begin{proof}
Let $k \colon \overline \Omega \to (L_2(\Omega),w)$ be as in Condition~\ref{pkernel231-2}
of Proposition~\ref{pkernel231}.
Let $(e_n)_{n \in \Ni}$ be an orthonormal basis for $L_2(\Omega)$.
Then 
\begin{eqnarray*}
\sum_{n=1}^\infty \|T e_n\|_2^2
= \sum_{n=1}^\infty \int_\Omega |(T e_n)(x)|^2 \, dx
& = & \sum_{n=1}^\infty \int_\Omega |(e_n, k_x)_{L_2(\Omega)}|^2 \, dx  \\
& = & \int_\Omega \sum_{n=1}^\infty |(e_n, k_x)_{L_2(\Omega)}|^2 \, dx
= \int_\Omega \|k_x\|_{L_2(\Omega)}^2 \, dx
< \infty
.  
\end{eqnarray*}
Hence $T$ is Hilbert--Schmidt and consequently compact.
\end{proof}

In general the operator in Corollary~\ref{ckernel232} is not trace class, 
see Example~\ref{xkernel404} below.
Also in general the operator in Corollary~\ref{ckernel232}
is not compact from $L_2(\Omega)$ into $C(\overline \Omega)$.
This is a stronger property that we descibe now.

\begin{cor} \label{ckernel233}
Let $\Omega \subset \Ri^d$ be open and bounded. 
Let $T \in \cl(L_2(\Omega))$ and suppose that $T L_2(\Omega) \subset C(\overline \Omega)$.
Let $k \colon \overline \Omega \to (L_2(\Omega),w)$ be as in Proposition~\ref{pkernel231}.
Then the following are equivalent.
\begin{tabeleq}
\item \label{ckernel233-1}
The operator $T$ is compact from $L_2(\Omega)$ into $C(\overline \Omega)$.
\item \label{ckernel233-2}
The map $x \mapsto \|k_x\|_{L_2(\Omega)}$ from 
$\overline \Omega$ into $\Ri$ is continuous.
\item \label{ckernel233-3}
The map $k \colon \overline \Omega \to L_2(\Omega)$ is continuous.
\item \label{ckernel233-4}
$\displaystyle 
 \lim_{N \to \infty} \: \sup_{x \in \overline \Omega} \: \sum_{n=N}^\infty |(e_n, k_x)|^2 = 0$.
\end{tabeleq}
\end{cor}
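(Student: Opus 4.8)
The plan is to prove the cycle of equivalences $\ref{ckernel233-1} \Rightarrow \ref{ckernel233-4} \Rightarrow \ref{ckernel233-3} \Rightarrow \ref{ckernel233-2} \Rightarrow \ref{ckernel233-1}$, exploiting the explicit representation $(Tu)(x) = (u,k_x)_{L_2(\Omega)}$ from Proposition~\ref{pkernel231}. The unifying observation is that $\|Tu\|_{C(\overline\Omega)} = \sup_{x\in\overline\Omega}|(u,k_x)|$, so compactness into $C(\overline\Omega)$ is equivalence to a uniform-in-$x$ control of how the functionals $u\mapsto(u,k_x)$ act, which is exactly what the tail condition \ref{ckernel233-4} quantifies.

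First I would fix an orthonormal basis $(e_n)_{n\in\Ni}$ of $L_2(\Omega)$, so that $\|k_x\|_{L_2(\Omega)}^2 = \sum_{n=1}^\infty |(e_n,k_x)|^2$ for each $x$. For \ref{ckernel233-1}$\Rightarrow$\ref{ckernel233-4}, I would argue by contradiction: if the uniform tail does not tend to $0$, there are $\delta>0$, indices $N_1<N_2<\cdots$, and points $x_j\in\overline\Omega$ with $\sum_{n\ge N_j}|(e_n,k_{x_j})|^2\ge\delta$. From these tails one extracts unit vectors $u_j$ that are (essentially) disjointly supported in the basis, hence converge weakly to $0$, while $\sup_x|(u_j,k_x)|\ge|(u_j,k_{x_j})|$ stays bounded below; this contradicts compactness of $T\colon L_2(\Omega)\to C(\overline\Omega)$ via the sequential characterisation of compactness recalled at the start of Section~\ref{Skernel2}. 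For \ref{ckernel233-4}$\Rightarrow$\ref{ckernel233-3}, the tail condition lets me approximate $k$ uniformly by the finite-rank maps $x\mapsto \sum_{n<N}(e_n,k_x)\,e_n$; each coordinate $x\mapsto(e_n,k_x)=\overline{(Te_n)(x)}$ is continuous because $Te_n\in C(\overline\Omega)$, so each truncation is norm-continuous, and a uniform limit of continuous $L_2$-valued maps is continuous, giving \ref{ckernel233-3}.

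The implication \ref{ckernel233-3}$\Rightarrow$\ref{ckernel233-2} is immediate since the norm is continuous on $L_2(\Omega)$. For the closing step \ref{ckernel233-2}$\Rightarrow$\ref{ckernel233-1}, I would use that $k$ is already weakly continuous (Proposition~\ref{pkernel231}); combining weak continuity with continuity of the norm $x\mapsto\|k_x\|$ upgrades to norm continuity of $k$ (in a Hilbert space, weak convergence plus convergence of norms gives strong convergence). Then $k(\overline\Omega)$ is a norm-compact subset of $L_2(\Omega)$, and I would show $T$ maps the unit ball of $L_2(\Omega)$ to an equicontinuous, uniformly bounded family in $C(\overline\Omega)$: equicontinuity of $\{x\mapsto(u,k_x):\|u\|\le1\}$ follows from $|(u,k_x)-(u,k_{x'})|\le\|k_x-k_{x'}\|$ uniformly in $u$, using compactness of $\overline\Omega$ and uniform continuity of $k$. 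An Arzel\`a--Ascoli argument then yields relative compactness of $T(B_{L_2})$ in $C(\overline\Omega)$.

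The main obstacle is the contradiction argument in \ref{ckernel233-1}$\Rightarrow$\ref{ckernel233-4}: one must manufacture a weakly null sequence from the non-vanishing tails. The clean route is to choose the $N_j$ increasing fast enough that the relevant spectral windows $[N_j, N_{j+1})$ are disjoint, set $u_j$ to be the normalised projection of $k_{x_j}$ onto the block of basis vectors indexed by $[N_j,\infty)$ truncated at $N_{j+1}$, and verify both $\|u_j\|=1$, $u_j\rightharpoonup0$, and $(u_j,k_{x_j})\not\to0$; getting the truncation and normalisation to simultaneously preserve the lower bound and force weak nullity is the delicate bookkeeping. Everything else reduces to standard Hilbert-space and Arzel\`a--Ascoli arguments.
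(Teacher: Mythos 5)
Your proposal is correct, but it traverses the cycle of equivalences in the opposite direction from the paper and uses genuinely different arguments on each leg. The paper proves \ref{ckernel233-1}$\Rightarrow$\ref{ckernel233-2}$\Rightarrow$\ref{ckernel233-3}$\Rightarrow$\ref{ckernel233-4}$\Rightarrow$\ref{ckernel233-1}: for \ref{ckernel233-1}$\Rightarrow$\ref{ckernel233-2} it uses the observation that compactness turns the weak continuity of $x \mapsto k_x$ into continuity of $x \mapsto T k_x$ in $C(\overline\Omega)$, and then evaluates on the diagonal, $(T k_x)(x) = \|k_x\|_{L_2(\Omega)}^2$, so continuity of the norm drops out with no contradiction argument; for \ref{ckernel233-3}$\Rightarrow$\ref{ckernel233-4} it notes that $k(\overline\Omega)$ is norm-compact and that the tail projections $Q_N$ converge to $0$ uniformly on norm-compact sets; and for \ref{ckernel233-4}$\Rightarrow$\ref{ckernel233-1} it exhibits $T$ as an operator-norm limit of the finite-rank operators $(T_N u)(x) = \sum_{n<N}(u,e_n)(e_n,k_x)$, which is slightly more explicit than mere compactness. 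Your route replaces the paper's two ``hard'' legs by a gliding-hump contradiction (for \ref{ckernel233-1}$\Rightarrow$\ref{ckernel233-4}: blocks $[N_j,N_{j+1})$ chosen so each carries a fixed fraction of the tail mass of $k_{x_j}$, giving an orthonormal, hence weakly null, sequence $u_j$ with $(u_j,k_{x_j})$ bounded below) and by an Arzel\`a--Ascoli argument (for \ref{ckernel233-2}$\Rightarrow$\ref{ckernel233-1}, after upgrading weak-plus-norm convergence to norm continuity of $k$, which is exactly the paper's \ref{ckernel233-2}$\Rightarrow$\ref{ckernel233-3} step). Both block constructions check out: your $u_j = P_jk_{x_j}/\|P_jk_{x_j}\|$ satisfies $(u_j,k_{x_j}) = \|P_jk_{x_j}\| \ge \sqrt{\delta/2}$, contradicting the sequential characterisation of compactness from Section~\ref{Skernel2}, and the equicontinuity bound $|(u,k_x)-(u,k_{x'})| \le \|k_x-k_{x'}\|$ with uniform continuity of $k$ on the compact set $\overline\Omega$ makes Arzel\`a--Ascoli apply. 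What the paper's ordering buys is that every leg is a short, direct, soft argument, avoiding your most delicate step entirely; what your ordering buys is independence from the diagonal trick $(Tk_x)(x)=\|k_x\|^2$ and a self-contained compactness criterion via Arzel\`a--Ascoli. One cosmetic slip: since $(Te_n)(x) = (e_n,k_x)$ by Proposition~\ref{pkernel231}, your identity $(e_n,k_x)=\overline{(Te_n)(x)}$ has a superfluous conjugate; this does not affect the continuity claim it supports.
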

\begin{proof}
`\ref{ckernel233-1}$\Rightarrow$\ref{ckernel233-2}'.
Proposition~\ref{pkernel231}\ref{pkernel231-2} gives that the 
map $x \mapsto k_x$ is continuous from 
$\overline \Omega$ into $(L_2(\Omega),w)$.
Since $T$ is compact from $L_2(\Omega)$ into $C(\overline \Omega)$, 
it follows that the map 
$x \mapsto T k_x$ is continuous from 
$\overline \Omega$ into $C(\overline \Omega)$.
Hence the map $x \mapsto (T k_x)(x)$ is continuous from 
$\overline \Omega$ into $\Ci$.
Because $(T k_x)(x) = (k_x, k_x)_{L_2(\Omega)} = \|k_x\|_{L_2(\Omega)}^2$
for all $x \in \overline \Omega$,
the implication follows.

`\ref{ckernel233-2}$\Rightarrow$\ref{ckernel233-3}'.
Let $x,x_1,x_2,\ldots \in \overline \Omega$ and suppose that 
$\lim_{n \to \infty} x_n = x$ in $\overline \Omega$.
Then $\lim_{n \to \infty} k_{x_n} = k_x$ in $(L_2(\Omega),w)$
by Proposition~\ref{pkernel231}\ref{pkernel231-2}.
Since $\lim_{n \to \infty} \|k_{x_n}\|_{L_2(\Omega)} = \|k_x\|_{L_2(\Omega)}$
by assumption, one deduces that
$\lim_{n \to \infty} k_{x_n} = k_x$ in $L_2(\Omega)$.

`\ref{ckernel233-3}$\Rightarrow$\ref{ckernel233-4}'.
For all $N \in \Ni$ define $Q_N \colon L_2(\Omega) \to L_2(\Omega)$ by 
$Q_N u = \sum_{n=N}^\infty (u,e_n)_{L_2(\Omega)} e_n$.
Then $\lim_{N \to \infty} Q_N u = 0$ in $L_2(\Omega)$ for all $u \in L_2(\Omega)$.
Hence if $F$ is a compact subset of $L_2(\Omega)$, then 
$\lim_{N \to \infty} \sup_{u \in F} \|Q_N u\|_{L_2(\Omega)} = 0$.
By assumption the map $k \colon \overline \Omega \to L_2(\Omega)$ is continuous.
Since $\overline \Omega$ is compact, the set 
$F = \{ k_x : x \in \overline \Omega \} $ is compact in $L_2(\Omega)$.
So $\lim_{N \to \infty} \sup_{x \in \overline \Omega} \|Q_N k_x\|_{L_2(\Omega)} = 0$.
This is Condition~\ref{ckernel233-4}.

`\ref{ckernel233-4}$\Rightarrow$\ref{ckernel233-1}'.
For all $N \in \Ni$ define $T_N \colon L_2(\Omega) \to C(\overline \Omega)$ by 
$(T_N u)(x) = \sum_{n=1}^{N-1} (u, e_n) \, (e_n, k_x)$.
Then $T_N$ has finite rank, hence it is compact.
Let $N \in \Ni$ and $u \in L_2(\Omega)$.
Then 
\begin{eqnarray*}
|((T - T_N) u)(x)|
& = & |(u,k_x) - (T_N u)(x)|  \\
& = & \Big| \sum_{n=N}^\infty (u, e_n) \, (e_n, k_x) \Big|  \\
& \leq & \Big( \sum_{n=N}^\infty |(u, e_n)|^2 \Big)^{1/2} 
         \Big( \sum_{n=N}^\infty |(e_n, k_x)|^2 \Big)^{1/2}  
\leq \|u\|_{L_2(\Omega)}
         \Big( \sum_{n=N}^\infty |(e_n, k_x)|^2 \Big)^{1/2} 
\end{eqnarray*}
for all $x \in \overline \Omega$.
So 
\[
\|T - T_N\|_{L_2(\Omega) \to C(\overline \Omega)}
\leq \sup_{x \in \overline \Omega} \: \Big( \sum_{n=N}^\infty |(e_n, k_x)|^2 \Big)^{1/2} 
\]
and $\lim_{N \to \infty} T_N = T$ in $\cl(L_2(\Omega) , C(\overline \Omega))$.
\end{proof}

In view of Corollary~\ref{ckernel232},
Theorem~\ref{tkernel202} takes a very simple form if $\Omega$ is bounded.

\begin{cor} \label{ckernel204.4}
Let $\Omega \subset \Ri^d$ be open and bounded.
Let $T_1,T_2,T_3 \in \cl(L_2(\Omega))$.
Suppose that $T_k L_2(\Omega) \subset C(\overline \Omega)$ for all $k \in \{ 1,2,3 \} $.
Then there exists a  $K \in C(\overline \Omega \times \overline \Omega)$
such that 
\[
(T_3 \, T_2 \, T_1^* u)(x)
= \int_\Omega K(x,y) \, u(y) \, dy
\]
for all $u \in L_2(\Omega)$ and $x \in \overline \Omega$.
\end{cor}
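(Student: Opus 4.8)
The plan is to reduce the statement to Theorem~\ref{tkernel202} by choosing $X = \overline \Omega$, which is legitimate and convenient because $\Omega$ is bounded, so that $\overline \Omega$ is compact and $C(\overline \Omega)$ is the usual sup-norm Banach space. With this choice the two range conditions $T_1 L_2(\Omega) \subset C(\overline \Omega)$ and $T_3 L_2(\Omega) \subset C(\overline \Omega)$ are literally two of the three hypotheses of that theorem, so for $T_1$ and $T_3$ there is nothing to verify.

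The only hypothesis of Theorem~\ref{tkernel202} not handed to us directly is that $T_2$ be compact as an operator \emph{from $L_2(\Omega)$ into $L_2(\Omega)$}; the assumption gives only that $T_2$ maps $L_2(\Omega)$ into $C(\overline \Omega)$. This is precisely where boundedness of $\Omega$ enters. I would invoke Corollary~\ref{ckernel232}: from $T_2 L_2(\Omega) \subset C(\overline \Omega)$ with $\Omega$ bounded it follows that $T_2$ is Hilbert--Schmidt, hence compact from $L_2(\Omega)$ into $L_2(\Omega)$, which is exactly the missing condition.

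Having checked all three hypotheses, I would then apply Theorem~\ref{tkernel202} with $X = \overline \Omega$ to produce a kernel $K \in C(\overline \Omega \times \overline \Omega)$ with $K(x,\cdot) \in L_2(\Omega)$ for every $x$ and the integral representation of $T_3 \, T_2 \, T_1^*$ valid for all $u \in L_2(\Omega)$ and all $x \in \overline \Omega$, which is the assertion.

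There is essentially no obstacle: the substance is already packaged in Theorem~\ref{tkernel202}, which extracts joint continuity from a single compactness assumption, and in Corollary~\ref{ckernel232}, which upgrades a range condition into $L_2$-compactness on a bounded domain. The one point worth flagging is the asymmetry in the hypotheses of Theorem~\ref{tkernel202}, namely that $T_2$ must be $L_2$-compact while $T_1$ and $T_3$ need only have continuous range; the role of the bounded $\Omega$ is exactly to show that on such a domain every operator with range in $C(\overline \Omega)$ automatically has the stronger compactness property, so that the symmetric hypothesis $T_k L_2(\Omega) \subset C(\overline \Omega)$ for all $k \in \{1,2,3\}$ already suffices.
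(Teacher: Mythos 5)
Your proof is correct and is precisely the argument the paper intends: the remark immediately preceding the corollary (``In view of Corollary~\ref{ckernel232}, Theorem~\ref{tkernel202} takes a very simple form if $\Omega$ is bounded'') encodes exactly your reduction, namely using Corollary~\ref{ckernel232} to upgrade the hypothesis $T_2 L_2(\Omega) \subset C(\overline \Omega)$ to compactness of $T_2$ on $L_2(\Omega)$, and then applying Theorem~\ref{tkernel202} with $X = \overline \Omega$. Nothing is missing, and your flag about the asymmetry of the hypotheses in Theorem~\ref{tkernel202} correctly identifies the only point where boundedness of $\Omega$ is used.
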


The following theorem is in the spirit of Mercer's theorem.

\begin{thm} \label{tkernel206}
Let $\Omega \subset \Ri^d$ be open and bounded.
Let $T_1,T_2 \in \cl(L_2(\Omega))$.
Suppose that $T_1 L_2(\Omega) \subset C(\overline \Omega)$ and
$T_2 L_2(\Omega) \subset C(\overline \Omega)$.
Moreover, suppose in addition that 
$T_1 \colon L_2(\Omega) \to C(\overline \Omega)$ is compact
or $T_2 \colon L_2(\Omega) \to C(\overline \Omega)$ is compact.
Let $K \in C(\overline \Omega \times \overline \Omega)$ 
be the kernel of the operator $T_2 \, T_1^*$.
Let $(e_n)_{n \in \Ni}$ be an orthonormal basis for $L_2(\Omega)$.
For all $n \in \Ni$ define $u_n = T_2 e_n$ and $v_n = T_1 e_n$.
Note that $u_n,v_n \in C(\overline \Omega)$.
Then 
\[
K = \sum_{n=1}^\infty u_n \otimes \overline{v_n}
\]
and the series converges in $C(\overline \Omega \times \overline \Omega)$.
\end{thm}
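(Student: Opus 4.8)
The plan is to reduce the statement to the uniform tail estimate furnished by Corollary~\ref{ckernel233}, after first identifying the partial sums of the series with $K$ pointwise by Parseval's identity. Throughout I assume that $T_2 \colon L_2(\Omega) \to C(\overline\Omega)$ is compact; the case in which $T_1$ is compact is treated identically by interchanging the roles of the two factors in the Cauchy--Schwarz estimate below.

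First I would recall the representing families. By Proposition~\ref{pkernel231} there exist continuous maps $k^{(1)}, k^{(2)} \colon \overline\Omega \to (L_2(\Omega), w)$ with $(T_1 u)(x) = (u, k^{(1)}_x)_{L_2(\Omega)}$ and $(T_2 u)(x) = (u, k^{(2)}_x)_{L_2(\Omega)}$, and by Proposition~\ref{pkernel201}\ref{pkernel201-1} the kernel of $T_2 \, T_1^*$ is $K(x,y) = (k^{(1)}_y, k^{(2)}_x)_{L_2(\Omega)}$. Evaluating on the basis gives $u_n(x) = (T_2 e_n)(x) = (e_n, k^{(2)}_x)$ and $v_n(y) = (T_1 e_n)(y) = (e_n, k^{(1)}_y)$. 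Expanding $(k^{(1)}_y, k^{(2)}_x)$ with respect to the orthonormal basis $(e_n)$ and using the conjugate-linearity of the inner product in the second slot then yields
\[
K(x,y) = \sum_{n=1}^\infty (e_n, k^{(2)}_x) \, \overline{(e_n, k^{(1)}_y)} = \sum_{n=1}^\infty u_n(x) \, \overline{v_n(y)}
\]
for every $(x,y) \in \overline\Omega \times \overline\Omega$, so the series converges to $K$ pointwise. This settles the identity; what remains is to upgrade pointwise to uniform convergence.

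The core step is the tail estimate. Writing $S_N = \sum_{n=1}^{N} u_n \otimes \overline{v_n}$ and applying the Cauchy--Schwarz inequality in $\ell^2$ to the remainder gives
\[
|K(x,y) - S_N(x,y)| \le \Big( \sum_{n=N+1}^\infty |(e_n, k^{(2)}_x)|^2 \Big)^{1/2} \Big( \sum_{n=N+1}^\infty |(e_n, k^{(1)}_y)|^2 \Big)^{1/2}
\]
for all $x, y \in \overline\Omega$. The second factor is bounded by $\|k^{(1)}_y\|_{L_2(\Omega)} \le \sup_{y} \|k^{(1)}_y\|_{L_2(\Omega)} = \|T_1\|_{L_2(\Omega) \to C(\overline\Omega)}$, which is finite by Proposition~\ref{pkernel231}. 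For the first factor I invoke the compactness of $T_2$: by the equivalence \ref{ckernel233-1}$\Leftrightarrow$\ref{ckernel233-4} in Corollary~\ref{ckernel233}, applied with $k = k^{(2)}$, one has $\lim_{N \to \infty} \sup_{x \in \overline\Omega} \sum_{n \ge N} |(e_n, k^{(2)}_x)|^2 = 0$. Taking the supremum over $(x,y)$ in the displayed bound therefore forces $\|K - S_N\|_{C(\overline\Omega \times \overline\Omega)} \to 0$, which is the asserted convergence in $C(\overline\Omega \times \overline\Omega)$.

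The one genuinely structural point — and the reason a one-sided compactness hypothesis suffices — is that the product estimate only requires the tail of the $k^{(2)}$-coefficients to be \emph{uniformly} small, whereas the $k^{(1)}$-coefficients need only be uniformly square-summable, i.e.\ bounded. Thus the main obstacle is not an analytic difficulty but the recognition that Corollary~\ref{ckernel233} is exactly the tool converting compactness of $T_2$ into $C(\overline\Omega)$ into the required uniform tail decay; once this is in hand, the Cauchy--Schwarz splitting finishes the argument.
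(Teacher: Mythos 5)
Your proof is correct and follows essentially the same route as the paper's: Parseval's identity applied to $K(x,y) = (k^{(1)}_y, k^{(2)}_x)_{L_2(\Omega)}$ gives the pointwise identity, and the Cauchy--Schwarz tail estimate, bounding one factor by $\|T_1\|_{L_2(\Omega) \to C(\overline \Omega)}$ via Proposition~\ref{pkernel231} and sending the other to zero uniformly via Corollary~\ref{ckernel233}\ref{ckernel233-1}$\Rightarrow$\ref{ckernel233-4}, is exactly the paper's argument, including the asymmetric use of the one-sided compactness hypothesis.
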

\begin{proof}
We use the notation as in the proof of Proposition~\ref{pkernel201}.
If $x,y \in \overline \Omega$, then 
\begin{eqnarray*}
K(x,y)
& = & (k^{(1)}_y, k^{(2)}_x)_{L_2(\Omega)}  \\
& = & \sum_{n=1}^\infty (k^{(1)}_y, e_n)_{L_2(\Omega)} \, (e_n, k^{(2)}_x)_{L_2(\Omega)}
= \sum_{n=1}^\infty \overline{ (T_1 e_n)(y) } \, (T_2 e_n)(x)
.
\end{eqnarray*}
So it remains to show the convergence in $C(\overline \Omega \times \overline \Omega)$.

Suppose that $T_2 \colon L_2(\Omega) \to C(\overline \Omega)$ is compact.
(The proof is similar in the other case.)
Let $N \in \Ni$ and let $x,y \in \overline \Omega$.
Then 
\begin{eqnarray*}
\sum_{n=N}^\infty |(u_n \otimes \overline{v_n})(x,y)|
& = & \sum_{n=N}^\infty 
   |(k^{(1)}_y, e_n)_{L_2(\Omega)}| \, |(e_n, k^{(2)}_x)_{L_2(\Omega)}|  \\
& \leq & \Big( \sum_{n=N}^\infty |(k^{(1)}_y, e_n)_{L_2(\Omega)}|^2 \Big)^{1/2}
         \Big( \sum_{n=N}^\infty |(e_n, k^{(2)}_x)_{L_2(\Omega)}|^2 \Big)^{1/2}  \\
& \leq & \|T_1\|_{L_2(\Omega) \to C(\overline \Omega)} \, 
     \Big( \sum_{n=N}^\infty |(e_n, k^{(2)}_x)_{L_2(\Omega)}|^2 \Big)^{1/2}
, 
\end{eqnarray*}
where we used the end of Proposition~\ref{pkernel231} in the last step.
Hence 
\[
\lim_{N \to \infty} \Big\| \sum_{n=N}^\infty |u_n \otimes \overline{v_n}| \Big\|_{C(\overline \Omega \times \overline \Omega)}
= 0
\]
by Corollary~\ref{ckernel233}\ref{ckernel233-1}$\Rightarrow$\ref{ckernel233-4}
and the result follows.
\end{proof}

Under the same conditions a trace formula is valid.

\begin{thm} \label{tkernel205}
Let $\Omega \subset \Ri^d$ be open and bounded.
Let $T_1,T_2 \in \cl(L_2(\Omega))$.
Suppose that $T_1 L_2(\Omega) \subset C(\overline \Omega)$ and
$T_2 L_2(\Omega) \subset C(\overline \Omega)$.
Moreover, suppose in addition that 
$T_1 \colon L_2(\Omega) \to C(\overline \Omega)$ is compact
or $T_2 \colon L_2(\Omega) \to C(\overline \Omega)$ is compact.
Let $K \in C(\overline \Omega \times \overline \Omega)$ 
be the kernel of the operator $T_2 \, T_1^*$.
Then $T_2 \, T_1^*$ is trace class and 
\[
\Tr(T_2 \, T_1^*)
= \int_\Omega K(x,x) \, dx
.  \]
\end{thm}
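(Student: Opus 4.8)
The plan is to reduce the statement to the Mercer-type expansion already obtained in Theorem~\ref{tkernel206}, together with the standard fact that a product of two Hilbert--Schmidt operators is trace class.

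First I would establish that $T_2 \, T_1^*$ is trace class. By Corollary~\ref{ckernel232} both $T_1$ and $T_2$ are Hilbert--Schmidt, hence so is $T_1^*$, and the product of two Hilbert--Schmidt operators is trace class. In particular $\Tr(T_2 \, T_1^*)$ is well defined. The continuous kernel $K$ of $T_2 \, T_1^*$ exists by hypothesis, so nothing further is needed on that point.

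Next I would compute $\int_\Omega K(x,x) \, dx$ by means of the expansion $K = \sum_{n=1}^\infty u_n \otimes \overline{v_n}$ from Theorem~\ref{tkernel206}, where $u_n = T_2 e_n$ and $v_n = T_1 e_n$. Since the series converges in $C(\overline \Omega \times \overline \Omega)$, its restriction to the diagonal converges uniformly on $\overline \Omega$; as $\Omega$ is bounded and hence of finite measure, I may integrate term by term to obtain
\[
\int_\Omega K(x,x) \, dx
= \sum_{n=1}^\infty \int_\Omega (T_2 e_n)(x) \, \overline{(T_1 e_n)(x)} \, dx
= \sum_{n=1}^\infty (T_2 e_n, T_1 e_n)_{L_2(\Omega)}
= \sum_{n=1}^\infty (T_1^* T_2 e_n, e_n)_{L_2(\Omega)} .
\]
Because $T_1^* T_2$ is trace class (again a product of Hilbert--Schmidt operators), the last sum is precisely $\Tr(T_1^* T_2)$, independently of the chosen orthonormal basis $(e_n)_{n \in \Ni}$.

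Finally I would invoke the cyclicity of the trace for Hilbert--Schmidt factors, namely $\Tr(T_1^* T_2) = \Tr(T_2 \, T_1^*)$, to conclude that $\int_\Omega K(x,x) \, dx = \Tr(T_2 \, T_1^*)$. I expect the only points requiring any attention to be the justification of the term-by-term integration --- which is immediate from the uniform convergence furnished by Theorem~\ref{tkernel206} together with the finiteness of $|\Omega|$ --- and the identification of $\sum_{n} (T_1^* T_2 e_n, e_n)$ with the trace followed by the cyclic exchange. Both are standard facts about Hilbert--Schmidt and trace-class operators rather than genuine obstacles, so the crux of the argument is really the availability of the uniformly convergent diagonal expansion, which has already been secured.
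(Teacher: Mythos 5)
Your proposal is correct and follows essentially the same route as the paper: both rest on the Hilbert--Schmidt property from Corollary~\ref{ckernel232} (so that $T_2\,T_1^*$ is trace class as a product of Hilbert--Schmidt operators), the uniformly convergent expansion $K=\sum_n u_n\otimes\overline{v_n}$ of Theorem~\ref{tkernel206} to justify swapping sum and integral, and the identity $\Tr(T_2\,T_1^*)=\Tr(T_1^*\,T_2)=\sum_n (T_2e_n,T_1e_n)_{L_2(\Omega)}$. The only difference is cosmetic --- you compute from $\int_\Omega K(x,x)\,dx$ toward the trace, while the paper runs the same chain of equalities in the opposite direction.
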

\begin{proof}
Clearly $T_2 \, T_1^*$ is trace class since it is the product
of two Hilbert--Schmidt operators.
Let $(e_n)_{n \in \Ni}$ be an orthonormal basis for $L_2(\Omega)$.
Then Theorem~\ref{tkernel206} gives
\begin{eqnarray*}
\Tr(T_2 \, T_1^*)
& = & \Tr(T_1^* \, T_2)
= (T_2, T_1)_\HS
= \sum_{n=1}^\infty (T_2 e_n, T_1 e_n)_{L_2(\Omega)}  \\
& = & \sum_{n=1}^\infty \int_\Omega (T_2 e_n)(x) \, \overline{ (T_1 e_n)(x)} \, dx 
= \int_\Omega \sum_{n=1}^\infty (T_2 e_n)(x) \, \overline{ (T_1 e_n)(x)} \, dx  
= \int_\Omega K(x,x) \, dx
\end{eqnarray*}
as required.
\end{proof}

We next give an example of a bounded set $\Omega$ and a positive (self-adjoint)
operator $T$ which maps $L_2(\Omega)$ into $C(\overline \Omega)$ 
such that the kernel of $T$ is not bounded.

\begin{exam} \label{xkernel207}
Choose $\Omega = (-1,1)$ and for all $n \in \Ni_0$ let $P_n$ be the 
$n$-th Legendre polynomial.
For all $n \in \Ni_0$ define $e_n = \sqrt{\frac{2n+1}{2}} \, P_n$.
Then $(e_n)_{n \in \Ni_0}$ is an orthonormal basis for $L_2(\Omega)$.
Define $T \in \cl(L_2(\Omega))$ by 
\[
T u = \sum_{n=1}^\infty \frac{1}{n^2} \, (u,e_n)_{L_2(\Omega)} \, e_n
.  \]
Clearly $T$ is positive.
Let $u \in L_2(\Omega)$.
Then 
\[
T u = \sum_{n=1}^\infty \frac{1}{n^2} \, \sqrt{\frac{2n+1}{2}} \, (u,e_n)_{L_2(\Omega)} \, P_n
.  \]
Since $\Big( \frac{1}{n^2} \, \sqrt{\frac{2n+1}{2}} \Big)_{n \in \Ni_0} \in \ell_2(\Ni_0)$
it follows that 
$\Big( \frac{1}{n^2} \, \sqrt{\frac{2n+1}{2}} \, (u,e_n)_{L_2(\Omega)} \Big)_{n \in \Ni_0} \in \ell_1(\Ni_0)$.
Moreover $\|P_n\|_{C(\overline \Omega)} = P_n(1) = 1$ for all $n \in \Ni_0$.
Therefore 
\[
\sum_{n=1}^\infty \frac{1}{n^2} \, \sqrt{\frac{2n+1}{2}} \, (u,e_n)_{L_2(\Omega)} \, P_n
\in C(\overline \Omega)
\]
and $T u \in C(\overline \Omega)$.

Define $K \colon \overline \Omega \times \Omega \to \Ci$ by 
\[
K(x,y) = \sum_{n=1}^\infty \frac{1}{n^2} \, \frac{2n+1}{2} \, P_n(x) \, P_n(y)
.  \]
Note that the series converges by \cite{Szego} Theorem~8.21.2.
Then $(T u)(x) = \int_\Omega K(x,y) \, u(y) \, dy$ for all $u \in L_2(\Omega)$
and $x \in \overline \Omega$.

Finally,
\begin{eqnarray*}
\sup_{x \in (0,1)} K(x,x)
& = & \sup_{x \in (0,1)} \sup_{N \in \Ni} 
   \sum_{n=1}^N \frac{1}{n^2} \, \frac{2n+1}{2} \, |P_n(x)|^2  \\
& = & \sup_{N \in \Ni} \sup_{x \in (0,1)} 
   \sum_{n=1}^N \frac{1}{n^2} \, \frac{2n+1}{2} \, |P_n(x)|^2  \\
& = & \sup_{N \in \Ni} 
   \sum_{n=1}^N \frac{1}{n^2} \, \frac{2n+1}{2} \, |P_n(1)|^2  \\
& = & \sum_{n=1}^\infty \frac{1}{n^2} \, \frac{2n+1}{2} 
= \infty
.
\end{eqnarray*}
Hence the kernel of $T$ is not bounded.
\end{exam}

We next derive a kind of converse of Proposition~\ref{pkernel201}\ref{pkernel201-2} 
for self-adjoint operators.

\begin{prop} \label{pkernel308}
Let $\Omega \subset \Ri^d$ be open and bounded.
Let $T \in \cl(L_2(\Omega))$ be a self-adjoint operator.
Suppose there exists a $K \in C(\overline \Omega \times \overline \Omega)$ 
such that $(T^2 u)(x) = \int_\Omega K(x,y) \, u(y) \, dy$ for all $u \in L_2(\Omega)$
and almost every $x \in \Omega$.
Then $T L_2(\Omega) \subset C(\overline \Omega)$.
Moreover, the operator $T \colon L_2(\Omega) \to C(\overline \Omega)$ is compact.
\end{prop}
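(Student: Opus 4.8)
The plan is to diagonalise $T$ and read off both conclusions from the uniform convergence supplied by Mercer's theorem. First I would observe that $K \in C(\overline \Omega \times \overline \Omega) \subset L_2(\Omega \times \Omega)$, since $\Omega$ is bounded; hence $T^2$ is Hilbert--Schmidt and in particular compact. Because $T$ is self-adjoint, compactness of $T^2$ forces compactness of $T$: the nonzero eigenvalues $\lambda_n$ of $T$ satisfy $\lambda_n^2 \to 0$, and $\ker T = \ker T^2$. Thus there is an orthonormal system $(\phi_n)_n$ of eigenvectors, $T \phi_n = \lambda_n \phi_n$ with $\lambda_n \in \Ri \setminus \{0\}$, which is a basis of $(\ker T)^\perp$, and $T u = \sum_n \lambda_n \, (u,\phi_n)_{L_2(\Omega)} \, \phi_n$ for all $u \in L_2(\Omega)$.

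Next I would show that each eigenfunction is continuous. Since $K$ is continuous and $\Omega$ is bounded, dominated convergence gives $T^2 L_2(\Omega) \subset C(\overline \Omega)$; applying this to $\phi_n$ yields $\lambda_n^2 \phi_n = T^2 \phi_n \in C(\overline \Omega)$, so $\phi_n \in C(\overline \Omega)$ for every $n$. Moreover the spectral kernel $\sum_n \lambda_n^2 \, \phi_n(x) \, \overline{\phi_n(y)}$ represents $T^2$ and so agrees with $K$ almost everywhere; by separate continuity one gets $K(x,y) = \sum_n \lambda_n^2 \, \phi_n(x) \, \overline{\phi_n(y)}$ on $\overline \Omega \times \overline \Omega$.

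The heart of the argument, and the step I expect to be the main obstacle, is Mercer's theorem: as $T^2$ is a positive operator with a continuous Hermitian kernel on the compact set $\overline \Omega$, the diagonal series converges uniformly,
\[
\sum_n \lambda_n^2 \, |\phi_n(x)|^2 = K(x,x) ,
\qquad
\lim_{N \to \infty} \: \sup_{x \in \overline \Omega} \: \sum_{n \geq N} \lambda_n^2 \, |\phi_n(x)|^2 = 0 .
\]
One route to this is to note that the partial sums $\sum_{n=1}^{N} \lambda_n^2 |\phi_n(x)|^2$ are continuous, increase in $N$, and converge pointwise to the continuous function $K(x,x)$, so Dini's theorem gives uniform convergence on $\overline \Omega$; the delicate point is the pointwise diagonal identity, which is the classical content of Mercer's theorem and the place where positivity of $T^2$ is essential.

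Granting this, both claims follow from a Cauchy--Schwarz tail estimate. For $u \in L_2(\Omega)$ and $x \in \overline \Omega$,
\[
\Big| \sum_{n \geq N} \lambda_n \, (u,\phi_n)_{L_2(\Omega)} \, \phi_n(x) \Big|
\leq \Big( \sum_{n \geq N} |(u,\phi_n)_{L_2(\Omega)}|^2 \Big)^{1/2}
     \Big( \sum_{n \geq N} \lambda_n^2 |\phi_n(x)|^2 \Big)^{1/2}
\leq \|u\|_{L_2(\Omega)} \, \sup_{x \in \overline \Omega} \Big( \sum_{n \geq N} \lambda_n^2 |\phi_n(x)|^2 \Big)^{1/2} .
\]
The right-hand side tends to $0$, so the continuous partial sums converge to $Tu$ uniformly on $\overline \Omega$; hence $Tu \in C(\overline \Omega)$, which proves $T L_2(\Omega) \subset C(\overline \Omega)$. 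The same estimate shows that the finite-rank operators $T_N u = \sum_{n=1}^{N-1} \lambda_n \, (u,\phi_n)_{L_2(\Omega)} \, \phi_n$ satisfy $\|T - T_N\|_{L_2(\Omega) \to C(\overline \Omega)} \leq \sup_{x \in \overline \Omega} \big( \sum_{n \geq N} \lambda_n^2 |\phi_n(x)|^2 \big)^{1/2} \to 0$, so $T$ is a norm limit of finite-rank maps and is therefore compact from $L_2(\Omega)$ into $C(\overline \Omega)$. Alternatively, once $T L_2(\Omega) \subset C(\overline \Omega)$ is established, the displayed tail bound is exactly condition~\ref{ckernel233-4} of Corollary~\ref{ckernel233}, which delivers the compactness directly.
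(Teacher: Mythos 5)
Your proof is correct and follows essentially the same route as the paper's: compactness of $T$ deduced from the Hilbert--Schmidt operator $T^2$, continuity of the eigenfunctions from $\lambda_n^2\phi_n = T^2\phi_n$, Mercer's theorem for the uniform convergence of $\sum_n \lambda_n^2|\phi_n|^2$ on $\overline\Omega$, and the Cauchy--Schwarz tail estimate for both the inclusion $TL_2(\Omega)\subset C(\overline\Omega)$ and compactness. The only cosmetic differences are that the paper proves compactness of $T\colon L_2(\Omega)\to C(\overline\Omega)$ by testing against weakly null sequences rather than by finite-rank norm approximation, and it cites its own generalised Mercer theorem (Theorem~\ref{tkernel502}) where you invoke the classical one.
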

\begin{proof}
Since $K$ is continuous it follows that $T^2 L_2(\Omega) \subset C(\overline \Omega)$.
Hence $T^2$ is compact and consequently $T$ is compact.
There exists an orthonormal basis $(e_n)_{n \in \Ni}$ for $L_2(\Omega)$ and
$\lambda_1,\lambda_2,\ldots \in \Ri$ such that $T e_n = \lambda_n \, e_n$
for all $n \in \Ni$.
We assume that $\lambda_n \neq 0$ for all $n \in \Ni$.
(The other case is similar.)
Then $\lambda_n^2 \, e_n = T^2 e_n \in C(\overline \Omega)$ and $e_n \in C(\overline \Omega)$
for all $n \in \Ni$.
It follows from Mercer's theorem (see Theorem~\ref{tkernel502})
that the series $\sum \lambda_n^2 \, |e_n|^2$
converges uniformly on $\overline \Omega$.

Let $u \in L_2(\Omega)$.
Then $T u = \sum_{n=1}^\infty \lambda_n \, (u,e_n) \, e_n$ in $L_2(\Omega)$.
Now 
\[
\sum_{n=N}^\infty | \lambda_n \, (u,e_n) \, e_n |
\leq \Big( \sum_{n=N}^\infty |(u,e_n)|^2 \Big)^{1/2} 
     \Big( \sum_{n=N}^\infty | \lambda_n \, e_n |^2 \Big)^{1/2} 
\]
for all $N \in \Ni$.
Since $\sum_{n=1}^\infty | \lambda_n \, e_n |^2$ is bounded, it follows that 
$\sum \lambda_n \, (u,e_n) \, e_n$ converges in $C(\overline \Omega)$.
So $T u \in C(\overline \Omega)$.

Finally we prove compactness.
Let $(u_m)_{m \in \Ni}$ be a sequence in $L_2(\Omega)$ which converges
weakly to zero.
We shall show that $\lim_{m \to \infty} T u_m = 0$ in $C(\overline \Omega)$.
Let $\varepsilon > 0$.
Since $\sum \lambda_n^2 \, |e_n|^2$
converges uniformly on $\overline \Omega$, there exists an $N \in \Ni$ 
such that 
$\sum_{n=N}^\infty \lambda_n^2 \, |e_n(x)|^2 \leq \varepsilon^2$ for all 
$x \in \overline \Omega$.
There exists an $M \in \Ni$ such that 
$|(u_m,e_n)| \, \|\lambda_n \, e_n\|_\infty \leq \frac{\varepsilon}{N}$
for all $n \in \{ 1,\ldots,N \} $ and $m \in \Ni$ with $m \geq M$.
Let $m \in \Ni$ with $m \geq M$.
Then 
\begin{eqnarray*}
|(T u_m)(x)|
& \leq & \sum_{n=1}^N |\lambda_n \, (u_m,e_n) \, e_n(x)|
   + \sum_{n=N+1}^\infty |\lambda_n \, (u_m,e_n) \, e_n(x)|  \\
& \leq & \sum_{n=1}^N \frac{\varepsilon}{N}
   + \Big( \sum_{n=N+1}^\infty |(u_m,e_n)|^2 \Big)^{1/2} 
     \Big( \sum_{n=N+1}^\infty | \lambda_n \, e_n(x) |^2 \Big)^{1/2}   \\
& \leq & \varepsilon + \|u_m\|_{L_2(\Omega)} \, \varepsilon
\end{eqnarray*}
for all $x \in \overline \Omega$.
Since $(u_m)_{m \in \Ni}$ is bounded in $L_2(\Omega)$ one deduces 
that $\lim_{m \to \infty} T u_m = 0$ in $C(\overline \Omega)$.
\end{proof}

\begin{thm} \label{tkernel310}
Let $\Omega \subset \Ri^d$ be open and bounded.
Let $T \in \cl(L_2(\Omega))$ be a self-adjoint operator.
Then the following are equivalent.
\begin{tabeleq}
\item \label{tkernel310-1}
There exists a $K \in C(\overline \Omega \times \overline \Omega)$ 
such that $(T^2 u)(x) = \int_\Omega K(x,y) \, u(y) \, dy$ for all $u \in L_2(\Omega)$
and almost every $x \in \Omega$.
\item \label{tkernel310-2}
$T L_2(\Omega) \subset C(\overline \Omega)$ and 
the operator $T \colon L_2(\Omega) \to C(\overline \Omega)$ is compact.
\end{tabeleq}
\end{thm}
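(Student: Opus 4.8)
The plan is to observe that this equivalence is essentially already packaged in the two preceding results, with self-adjointness providing the crucial link $T^2 = T\,T^*$. The implication from \ref{tkernel310-1} to \ref{tkernel310-2} requires nothing new, while the converse is a clean application of Proposition~\ref{pkernel201} once one notices that the compactness hypothesis in \ref{tkernel310-2} is exactly what triggers joint continuity of the kernel.

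For \ref{tkernel310-1}$\Rightarrow$\ref{tkernel310-2}, I would simply invoke Proposition~\ref{pkernel308}. Its hypotheses are that $T$ is self-adjoint and that $T^2$ has a kernel $K \in C(\overline\Omega \times \overline\Omega)$, which is precisely \ref{tkernel310-1}; its conclusion is that $T L_2(\Omega) \subset C(\overline\Omega)$ and that $T \colon L_2(\Omega) \to C(\overline\Omega)$ is compact, which is precisely \ref{tkernel310-2}. So this direction is immediate.

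For the converse \ref{tkernel310-2}$\Rightarrow$\ref{tkernel310-1}, I would apply Proposition~\ref{pkernel201} with the choice $T_1 = T_2 = T$ and $X = \overline\Omega$, noting that since $\Omega$ is bounded, $\overline\Omega$ is compact and the Fr\'echet topology on $C(\overline\Omega)$ coincides with the sup-norm topology. The hypothesis $T L_2(\Omega) \subset C(\overline\Omega)$ from \ref{tkernel310-2} supplies both conditions $T_1 L_2(\Omega) \subset C(\overline\Omega)$ and $T_2 L_2(\Omega) \subset C(\overline\Omega)$. Because $T$ is self-adjoint, the operator furnished by the proposition is $T_2\,T_1^* = T\,T^* = T^2$, and part~\ref{pkernel201-1} already gives a separately continuous kernel $K$ with $K(x,\cdot) \in L_2(\Omega)$ and the desired integral representation. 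To upgrade to joint continuity, I would use part~\ref{pkernel201-2}: the compactness of $T \colon L_2(\Omega) \to C(\overline\Omega)$ assumed in \ref{tkernel310-2} is exactly the additional compactness hypothesis there, so $K \in C(\overline\Omega \times \overline\Omega)$. This yields \ref{tkernel310-1}.

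There is no serious obstacle here; the only points requiring care are the bookkeeping that $T_2\,T_1^*$ collapses to $T^2$ under self-adjointness, and the recognition that the compactness condition in \ref{tkernel310-2} is stated exactly so as to match the compactness hypothesis of Proposition~\ref{pkernel201}\ref{pkernel201-2}. In effect, the theorem records that for self-adjoint $T$ the two earlier one-directional results, Proposition~\ref{pkernel201} and Proposition~\ref{pkernel308}, are inverse to one another.
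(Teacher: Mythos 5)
Your proposal is correct and follows exactly the paper's own proof: the forward implication is Proposition~\ref{pkernel308} verbatim, and the converse is the special case $T_1 = T_2 = T$, $X = \overline\Omega$ of Proposition~\ref{pkernel201}\ref{pkernel201-2}, using self-adjointness to identify $T_2\,T_1^* = T^2$. The paper states both directions in one line each; you have merely spelled out the same bookkeeping.
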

\begin{proof}
`\ref{tkernel310-1}$\Rightarrow$\ref{tkernel310-2}'.
This is Proposition~\ref{pkernel308}.

`\ref{tkernel310-2}$\Rightarrow$\ref{tkernel310-1}'.
This is a special case of Proposition~\ref{pkernel201}\ref{pkernel201-2}.
\end{proof}

\section{Positive operators} \label{Skernel4}

For positive operators one can improve Corollary~\ref{ckernel204.4}.

\begin{thm} \label{tkernel204.5}
Let $\Omega \subset \Ri^d$ be open and bounded. 
Let $T \in \cl(L_2(\Omega))$ be positive and suppose that 
$T L_2(\Omega) \subset C(\overline \Omega)$.
Then for all $\varepsilon > 0$ there exists a 
$K \in C(\overline \Omega \times \overline \Omega)$
such that 
\[
(T^{2+\varepsilon} u)(x)
= \int_\Omega K(x,y) \, u(y) \, dy
\]
for all $u \in L_2(\Omega)$ and $x \in \overline \Omega$.
\end{thm}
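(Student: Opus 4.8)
The plan is to diagonalise $T$ and to read the kernel of $T^{2+\varepsilon}$ directly off the spectral expansion; the whole point is to exploit positivity so that the fractional power $\lambda_n^{\varepsilon}$ is meaningful and forces the tail of the kernel series to decay \emph{uniformly}, which is exactly the gain over the power $2$.

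First I would use Corollary~\ref{ckernel232} to note that $T$ is Hilbert--Schmidt, hence compact, so the spectral theorem supplies an orthonormal basis $(e_n)_{n \in \Ni}$ of $L_2(\Omega)$ and eigenvalues $\lambda_n \geq 0$ with $T e_n = \lambda_n e_n$ and $\lambda_n \to 0$. Positivity is precisely what guarantees $\lambda_n \geq 0$, so that $\lambda_n^{s}$ is well defined for every $s > 0$; for a merely self-adjoint $T$ the exponent $\varepsilon$ would be meaningless. For each $n$ with $\lambda_n > 0$ one has $e_n = \lambda_n^{-1} T e_n \in C(\overline \Omega)$ by hypothesis, so every term of the candidate series
\[
K(x,y) := \sum_{n=1}^\infty \lambda_n^{2+\varepsilon} \, e_n(x) \, \overline{e_n(y)}
\]
is continuous on $\overline \Omega \times \overline \Omega$ (the terms with $\lambda_n = 0$ vanish).

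The key estimate comes from Proposition~\ref{pkernel231}. Writing $k_x$ for the weakly continuous map provided there, the eigenfunction relation gives $(e_n, k_x)_{L_2(\Omega)} = \lambda_n \, e_n(x)$, whence, for all $x \in \overline \Omega$,
\[
\sum_{n=1}^\infty \lambda_n^2 \, |e_n(x)|^2 = \|k_x\|_{L_2(\Omega)}^2 \leq \|T\|_{L_2(\Omega) \to C(\overline \Omega)}^2 =: M^2 ,
\]
using the norm identity at the end of Proposition~\ref{pkernel231}. Ordering the eigenvalues decreasingly and using $\lambda_n \to 0$, a single Cauchy--Schwarz step controls the tail: for $N \leq n$,
\[
\sum_{n \geq N} \lambda_n^{2+\varepsilon} \, |e_n(x)| \, |e_n(y)|
\leq \lambda_N^{\varepsilon} \Big( \sum_{n \geq N} \lambda_n^2 |e_n(x)|^2 \Big)^{1/2} \Big( \sum_{n \geq N} \lambda_n^2 |e_n(y)|^2 \Big)^{1/2}
\leq \lambda_N^{\varepsilon} \, M^2 ,
\]
which tends to $0$ independently of $x$ and $y$. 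Hence the series defining $K$ converges absolutely and uniformly on $\overline \Omega \times \overline \Omega$, so $K \in C(\overline \Omega \times \overline \Omega)$.

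It then remains only to identify $K$ as the kernel of $T^{2+\varepsilon}$, which is routine: since $T^{2+\varepsilon} u = \sum_n \lambda_n^{2+\varepsilon} (u,e_n)_{L_2(\Omega)} e_n$ and the series for $K(x,\cdot)$ converges uniformly on the bounded set $\Omega$, I can interchange sum and integral to obtain $\int_\Omega K(x,y) \, u(y) \, dy = (T^{2+\varepsilon} u)(x)$ for every $x \in \overline \Omega$ and $u \in L_2(\Omega)$. The main obstacle is entirely concentrated in the tail estimate, and it is instructive: the extra factor $\lambda_N^{\varepsilon}$ is exactly what is bought by raising the power from $2$ to $2+\varepsilon$, and at $\varepsilon = 0$ the bound degrades to mere boundedness of the tail, consistent with the earlier example showing that $T^2$ need not have a continuous kernel.
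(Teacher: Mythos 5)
Your proof is correct, but it takes a genuinely different route from the paper's. The paper argues entirely at the operator level: since $T$ is compact on $L_2(\Omega)$ (Corollary~\ref{ckernel232}), the fractional power $T^{\varepsilon/2}$ is compact on $L_2(\Omega)$; composing with the bounded map $T \colon L_2(\Omega) \to C(\overline \Omega)$ makes $T^{1+\varepsilon/2} \colon L_2(\Omega) \to C(\overline \Omega)$ compact, and writing $T^{2+\varepsilon} = (T^{1+\varepsilon/2})(T^{1+\varepsilon/2})^*$ the continuous kernel drops out of Proposition~\ref{pkernel201}\ref{pkernel201-2} --- no spectral decomposition is used. You instead diagonalise $T$ and construct the kernel explicitly as the Mercer-type series $\sum_n \lambda_n^{2+\varepsilon} \, e_n \otimes \overline{e_n}$, controlling the tail by the identity $\sum_n \lambda_n^2 |e_n(x)|^2 = \|k_x\|_{L_2(\Omega)}^2 \leq \|T\|_{L_2(\Omega) \to C(\overline \Omega)}^2$ from Proposition~\ref{pkernel231} together with the extracted factor $\lambda_N^{\varepsilon}$. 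The paper's argument is shorter and more structural; yours yields strictly more information, namely a uniformly convergent eigenfunction expansion of $K$ (in the spirit of Theorem~\ref{tkernel502}) and a transparent explanation of exactly where $\varepsilon > 0$ is spent and why the bound degenerates at $\varepsilon = 0$, consistent with Example~\ref{xkernel203}.

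Two small points to tidy, neither of which affects correctness. First, if $\ker T$ is infinite dimensional you cannot literally enumerate \emph{all} eigenvalues (zeros included) in decreasing order; either discard the indices with $\lambda_n = 0$ (they contribute nothing to the series) and order the remaining ones, or avoid ordering altogether by replacing $\lambda_N^{\varepsilon}$ with $\sup_{m \geq N} \lambda_m^{\varepsilon}$, which tends to $0$ for any enumeration since $\lambda_n \to 0$. Second, your final identification first gives $(T^{2+\varepsilon}u)(x) = \int_\Omega K(x,y) \, u(y) \, dy$ for almost every $x$; to get it for \emph{all} $x \in \overline \Omega$ one should note that $T^{2+\varepsilon} u = T(T^{1+\varepsilon}u) \in C(\overline \Omega)$ by hypothesis, so both sides are continuous functions agreeing almost everywhere, hence everywhere.
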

\begin{proof}
The operator $T$ is compact from $L_2(\Omega)$ into $L_2(\Omega)$
by Corollary~\ref{ckernel232}.
Hence $T^{\varepsilon / 2}$ is compact from $L_2(\Omega)$ into $L_2(\Omega)$.
The closed graph theorem implies that the 
operator $T$ is bounded from $L_2(\Omega)$ into $C(\overline \Omega)$.
Therefore the operator $T^{1+\varepsilon / 2}$ is compact 
from $L_2(\Omega)$ into $C(\overline \Omega)$.
Since $T^{2+\varepsilon} = (T^{1+\varepsilon / 2}) (T^{1+\varepsilon / 2})^*$
the result follows from Proposition~\ref{pkernel201}\ref{pkernel201-2}.
\end{proof}

In Example~\ref{xkernel203} we constructed a positive operator $T$ on a bounded open 
set $\Omega \subset \Ri^d$ such that $T L_2(\Omega) \subset C(\overline \Omega)$,
but $T^2$ does not have a kernel in $C(\overline \Omega \times \overline \Omega)$.
Hence the power $2+\varepsilon$ in Theorem~\ref{tkernel204.5} is optimal.

In the situation of Theorem~\ref{tkernel204.5} one has in
general $T^\alpha L_2(\Omega) \not\subset C(\overline \Omega)$ for all 
$\alpha \in (0,1)$.
We show this by an example.

\begin{exam} \label{xkernel404}
Let $\Omega = (0,2\pi)$ and for all $n \in \Zi$ define $e_n \in C(\overline \Omega)$
by $e_n(x) = e^{i n x}$.
Then $(e_n)_{n \in \Zi}$ is an orthonormal basis for $L_2(\Omega)$.
For all $n \in \Ni$ let $\lambda_n \in [0,\infty)$ be such that 
$\sum_{n=1}^\infty \lambda_n < \infty$, but 
$\sum_{n=1}^\infty \lambda_n^\alpha = \infty$
for all $\alpha \in (0,1)$ (such a sequence exists).
Define $K \in C(\overline \Omega \times \overline \Omega)$ by 
\[
K(x,y)
= \sum_{n=1}^\infty \lambda_n \, e_n(x) \, \overline{ e_n(y) }
.  \]
Let $S \in \cl(L_2(\Omega))$ be the associated operator.
Then $S$ is positive.
Define $T = S^{1/2}$.
One deduces from Theorem~\ref{tkernel310} that 
$T^{1/2} L_2(\Omega) \subset C(\overline \Omega)$.

Let $\alpha \in (0,1)$ and suppose that 
$T^\alpha L_2(\Omega) \subset C(\overline \Omega)$.
Then $T^\alpha$ is Hilbert--Schmidt by Corollary~\ref{ckernel232},
so $S^\alpha = (T^{\alpha})^2$ is trace class
and $\sum_{n=1}^\infty \lambda_n^\alpha < \infty$.
This is a contradiction.
\end{exam}

\section{A variation of Mercer's theorem} \label{Skernel5}

In Mercer's theorem a continuous kernel $K$ is given on 
$\overline \Omega \times \overline \Omega$,
where $\Omega$ is bounded.
In this section we wish to consider continuous kernels which may be
merely defined on $\Omega \times \Omega$.
If they are Hilbert--Schmidt then we investigate the 
associated operator~$T$.
A central role is played again by the condition 
$T L_2(\Omega) \subset C(\overline \Omega)$.
For continuous kernels the inclusion can be characterised in terms of the 
kernel what we do in the next lemma.
In fact, we want to be slightly more general.

Recall, if $\Omega \subset \Ri$ is open, non-empty and $X \subset \Ri^d$ is a set such that 
$\Omega \subset X \subset \overline \Omega$, then we provide $C(X)$ 
with the Fr\'echet topology of uniform convergence on compact subsets of~$X$.
We emphasise that $\Omega$ does not need to be bounded.

\begin{lemma} \label{lkernel501}
Let $\Omega \subset \Ri$ be open, non-empty and $X \subset \Ri^d$ a set such that 
$\Omega \subset X \subset \overline \Omega$.
Let $K \in C(X \times \Omega)$ and $T \in \cl(L_2(\Omega))$.
Suppose that for all $u \in C_c(\Omega)$ the equality
\[
(T u)(x) = \int_\Omega K(x,y) \, u(y) \, dy
\]
is valid for almost every $x \in \Omega$.
Then the following are equivalent.
\begin{tabeleq}
\item \label{lkernel501-1}
$T L_2(\Omega) \subset C(X)$.
\item \label{lkernel501-2}
$\sup_{x \in F} \int_\Omega |K(x,y)|^2 \, dy < \infty$ for every compact $F \subset X$.
\end{tabeleq}
\end{lemma}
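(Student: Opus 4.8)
The plan is to link the abstract vectors $k_x$ supplied by the Riesz representation theorem with the given continuous function $K$, showing that in both directions $\int_\Omega |K(x,y)|^2\,dy$ equals $\|k_x\|_{L_2(\Omega)}^2$. The recurring technical point is that statements holding almost everywhere on $\Omega$ upgrade to pointwise statements on $X$, because the open set $\Omega$ is dense in $X \subset \overline\Omega$ and two continuous functions on $X$ that agree almost everywhere on $\Omega$ must coincide on all of $X$.

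For \ref{lkernel501-1}$\Rightarrow$\ref{lkernel501-2} I would first repeat the closed graph and Riesz argument from the proof of Proposition~\ref{pkernel201}: assuming $T L_2(\Omega) \subset C(X)$, for every compact $F \subset X$ the map $u \mapsto (Tu)|_F$ is bounded from $L_2(\Omega)$ into $C(F)$, so there exist $k_x \in L_2(\Omega)$ with $(Tu)(x) = (u,k_x)_{L_2(\Omega)}$ for all $u \in L_2(\Omega)$ and $x \in X$, and with $\sup_{x \in F}\|k_x\|_{L_2(\Omega)} < \infty$. Next I fix $u \in C_c(\Omega)$: the function $x \mapsto (u,k_x)_{L_2(\Omega)}$ is the continuous representative of $Tu$, while $x \mapsto \int_\Omega K(x,y)\,u(y)\,dy$ is continuous on $X$ because $K$ is jointly continuous and $u$ has compact support (dominated convergence on $\supp u$). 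By hypothesis these two continuous functions agree almost everywhere on $\Omega$, hence everywhere on $X$ by the density observation above. Thus $\int_\Omega u(y)\,\overline{k_x(y)}\,dy = \int_\Omega K(x,y)\,u(y)\,dy$ for every $u \in C_c(\Omega)$, which forces $K(x,\cdot)$ to be the continuous representative of $\overline{k_x} \in L_2(\Omega)$; consequently $\int_\Omega |K(x,y)|^2\,dy = \|k_x\|_{L_2(\Omega)}^2$, and \ref{lkernel501-2} follows from the local boundedness of $x \mapsto k_x$.

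For \ref{lkernel501-2}$\Rightarrow$\ref{lkernel501-1} I would set $g_u(x) = \int_\Omega K(x,y)\,u(y)\,dy$ for $u \in L_2(\Omega)$ and $x \in X$; this is well defined by Cauchy--Schwarz because \ref{lkernel501-2} gives $K(x,\cdot) \in L_2(\Omega)$. The first claim is that $g_u \in C(X)$: the map $x \mapsto K(x,\cdot)$ is continuous from $X$ into $(L_2(\Omega),w)$, since it is bounded on compact subsets by \ref{lkernel501-2} and converges weakly along convergent sequences in $X$ (testing against $C_c(\Omega)$ via dominated convergence, then extending to $L_2(\Omega)$ by density and the uniform bound), whence $g_u$ is continuous. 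The second claim is that $g_u = Tu$ almost everywhere on $\Omega$: this holds for $u \in C_c(\Omega)$ by assumption, and for general $u \in L_2(\Omega)$ I would approximate $u$ by $u_m \in C_c(\Omega)$ in $L_2(\Omega)$, note that $Tu_m \to Tu$ in $L_2(\Omega)$ (so a subsequence converges almost everywhere) while $g_{u_m} \to g_u$ pointwise, indeed locally uniformly on $X$ by the bound in \ref{lkernel501-2}, and then match the two almost-everywhere limits. Hence $Tu$ has the continuous representative $g_u \in C(X)$, giving \ref{lkernel501-1}.

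The routine parts are the two dominated convergence estimates and the density approximation; the uniqueness built into reconciling pointwise and almost-everywhere equalities is of the type recorded in Lemma~\ref{lkernel240}. The only genuinely delicate step is establishing the weak $L_2$-continuity of $x \mapsto K(x,\cdot)$ from mere joint continuity of $K$ together with the local $L_2$-bound in \ref{lkernel501-2}, as this is precisely what makes $g_u$ continuous rather than merely measurable, and it is the hinge on which both directions turn.
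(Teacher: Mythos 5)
Your proposal is correct and follows essentially the same route as the paper: the implication \ref{lkernel501-1}$\Rightarrow$\ref{lkernel501-2} via the closed graph theorem and duality against $C_c(\Omega)$, and \ref{lkernel501-2}$\Rightarrow$\ref{lkernel501-1} via dominated convergence for compactly supported $u$ followed by approximation of general $u \in L_2(\Omega)$ using the local $L_2$-bound on $K(x,\cdot)$ (your ``weak continuity of $x \mapsto K(x,\cdot)$'' is just a repackaging of the paper's $3\varepsilon$-estimate). The only difference is that you spell out the almost-everywhere-to-pointwise identifications (matching $K(x,\cdot)$ with the Riesz representative, and $g_u = Tu$ a.e.\ via a subsequence) which the paper leaves implicit; this is a virtue, not a deviation.
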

\begin{proof}
`\ref{lkernel501-1}$\Rightarrow$\ref{lkernel501-2}'.
Let $F \subset X$ be compact.
Then the operator $u \mapsto (T u)|_F \in C(F)$ is continuous by the closed graph theorem.
Hence there exists a $c > 0$ such that 
\[
|(K(x,\cdot), \overline u)_{L_2(\Omega)}|
= |(T u)(x)| 
\leq c \, \|u\|_{L_2(\Omega)}
\]
for all $u \in C_c(\Omega)$ and $x \in F$.
Then $\|K(x,\cdot)\|_{L_2(\Omega)} \leq c$ for all $x \in F$ and the implication follows.

`\ref{lkernel501-2}$\Rightarrow$\ref{lkernel501-1}'.
If $u \in C_c(\Omega)$, then the continuity of $T u$ follows from the Lebesgue dominated 
convergence theorem.
Next, let $u \in L_2(\Omega)$ and let $x,x_1,x_2,\ldots \in X$ with $\lim_{n \to \infty} x_n = x$.
Choose $F = \{ x, x_1,x_2,\ldots \} $.
Then $F$ is compact.
So by assumption there exists a $c > 0$ such that 
$\int_\Omega |K(z,y)|^2 \, dy \leq c^2$ for all $z \in F$.
Let $\varepsilon > 0$.
There exists a $v \in C_c(\Omega)$ such that $\|u - v\|_{L_2(\Omega)} \leq \varepsilon$.
Then 
\begin{eqnarray*}
\lefteqn{
|(T u)(x_n) - (T u)(x)| 
} \hspace*{5mm}  \\*
& \leq & |(T u)(x_n) - (T v)(x_n)| 
   + |(T v)(x_n) - (T v)(x)|
   + |(T v)(x) - (T u)(x)|  \\
& \leq & c \, \|u-v\|_{L_2(\Omega)} + |(T v)(x_n) - (T v)(x)| + c \, \|u-v\|_{L_2(\Omega)}
\leq |(T v)(x_n) - (T v)(x)| + 2 c \, \varepsilon
\end{eqnarray*}
for all $n \in \Ni$.
Hence $\limsup_{n \to \infty} |(T u)(x_n) - (T u)(x)| \leq 2 c \, \varepsilon$
and $\lim_{n \to \infty} (T u)(x_n) = (T u)(x)$.
\end{proof}

The main theorem of this section is as follows.
Note that Mercer's theorem is a special case if one chooses $\Omega$ bounded 
and $X = \overline \Omega$.

\begin{thm} \label{tkernel502}
Let $\Omega \subset \Ri$ be open, non-empty and $X \subset \Ri^d$ a set such that 
$\Omega \subset X \subset \overline \Omega$.
Let $K \in C(X \times X)$.
Further, let $T \in \cl(L_2(\Omega))$ be a compact positive operator 
such that $T L_2(\Omega) \subset C(X)$
and such that 
\[
(T u)(x) 
= \int_\Omega K(x,y) \, u(y) \, dy
\]
for all $x \in \Omega$ and $u \in C_c(\Omega)$.

Then there exist an orthonormal basis 
 $(e_n)_{n \in \Ni}$ in $L_2(\Omega)$ 
and for all $n \in \Ni$ there is a $\lambda_n \in [0,\infty)$ such that $\lambda_n \, e_n \in C(X)$ 
and $T e_n = \lambda_n \, e_n$ for all $n \in \Ni$.
In particular, $e_n \in C(X)$ if $\lambda_n \neq 0$, and 
$\lambda_n \, e_n \otimes \overline{e_n} \in C(X \times X)$
for all $n \in \Ni$.
Moreover, 
\[
K(x,y)
= \sum_{n=1}^\infty \lambda_n \, e_n(x) \, \overline{ e_n(y) }
\]
for all $x,y \in X$ and the series
$\sum \lambda_n \, |e_n \otimes \overline{e_n}|$ converges
uniformly on compact subsets of $X \times X$.

Finally, $K(x,x) \geq 0$ for all $x \in X$
and 
\[
\sum_{n=1}^\infty \lambda_n = \int_\Omega K(x,x) \, dx
.  \]
In particular, $T$ is trace class if and only if $\int_\Omega K(x,x) \, dx < \infty$.
\end{thm}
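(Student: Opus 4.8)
The plan is to build everything on the spectral decomposition of the compact positive operator $T$. Since $T$ is compact, self-adjoint and positive on $L_2(\Omega)$, the spectral theorem furnishes an orthonormal basis $(e_n)_{n \in \Ni}$ of eigenvectors with eigenvalues $\lambda_n \in [0,\infty)$, so that $T e_n = \lambda_n \, e_n$. The first observation is that whenever $\lambda_n \neq 0$ we have $e_n = \lambda_n^{-1} \, T e_n \in C(X)$ by the hypothesis $T L_2(\Omega) \subset C(X)$, and in all cases $\lambda_n \, e_n = T e_n \in C(X)$; hence $\lambda_n \, e_n \otimes \overline{e_n} \in C(X \times X)$. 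This disposes of the first paragraph of the conclusion with almost no work.

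The main analytic content is the uniform convergence of $\sum \lambda_n \, |e_n \otimes \overline{e_n}|$ and the pointwise identity $K(x,y) = \sum_n \lambda_n \, e_n(x) \, \overline{e_n(y)}$ on $X \times X$, which is the genuine Mercer step. First I would verify the kernel identity on $\Omega$: for fixed $x$, the hypothesis gives $(T u)(x) = (K(x,\cdot), \overline u)_{L_2}$ for $u \in C_c(\Omega)$, and expanding $T$ in the eigenbasis identifies $K(x,\cdot)$ (as an $L_2$ function in the second variable) with $\sum_n \lambda_n \, e_n(x) \, \overline{e_n}$. The crucial positivity argument is Mercer's classical trick applied \emph{locally}: for any compact $F \subset X$ and any finite partial sum, positivity of $T$ forces the continuous diagonal function $x \mapsto \sum_{n=1}^N \lambda_n \, |e_n(x)|^2$ to be dominated by $K(x,x)$. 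Indeed one shows $K(x,x) - \sum_{n=1}^N \lambda_n |e_n(x)|^2 \geq 0$ because it is the diagonal of the kernel of the positive operator $T - \sum_{n=1}^N \lambda_n \, (\cdot, e_n) e_n$, a positivity that transfers from $\Omega$ to all of $X$ by separate continuity (Lemma~\ref{lkernel240}) together with the separate continuity of $K$ guaranteed by Proposition~\ref{pkernel201}. Thus the increasing sums $\sum_{n=1}^N \lambda_n |e_n(x)|^2$ are bounded by the continuous function $K(x,x)$, uniformly on $F$.

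From this monotone bounded diagonal one runs the standard Dini plus Cauchy--Schwarz argument: the partial sums $x \mapsto \sum_n \lambda_n |e_n(x)|^2$ increase to a continuous limit (continuity of the limit being forced by local uniform convergence of a monotone sequence of continuous functions via Dini's theorem on each compact $F$), and then Cauchy--Schwarz, $\bigl|\sum_{n=N}^M \lambda_n e_n(x)\overline{e_n(y)}\bigr| \leq \bigl(\sum \lambda_n|e_n(x)|^2\bigr)^{1/2}\bigl(\sum \lambda_n|e_n(y)|^2\bigr)^{1/2}$, upgrades this to uniform convergence of $\sum \lambda_n \, |e_n \otimes \overline{e_n}|$ on $F \times F$. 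The uniform limit is continuous and agrees with $K$ a.e.\ on $\Omega \times \Omega$, so by separate continuity and Lemma~\ref{lkernel240} the two coincide pointwise on $X \times X$, giving the stated expansion and $K(x,x) = \sum_n \lambda_n |e_n(x)|^2 \geq 0$. Finally, for the trace formula I would integrate the diagonal expansion over $\Omega$ and interchange sum and integral by the monotone convergence theorem (all terms nonnegative), obtaining $\int_\Omega K(x,x)\,dx = \sum_n \lambda_n \int_\Omega |e_n|^2 = \sum_n \lambda_n$; this also yields the trace-class characterisation. The main obstacle I anticipate is the transfer of Mercer's pointwise positivity from $\Omega$, where the operator identity lives, out to the boundary set $X \setminus \Omega$ and securing the Dini argument on noncompact $\Omega$ purely through local (compact-subset) uniform control; the separate continuity from Proposition~\ref{pkernel201} and the uniqueness Lemma~\ref{lkernel240} are exactly the tools that bridge this gap.
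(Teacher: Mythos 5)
Your overall strategy coincides with the paper's own proof: spectral decomposition of the compact positive operator, positivity of the remainders $T - T_N$ (where $T_N u = \sum_{n=1}^N \lambda_n (u,e_n) e_n$) to dominate the diagonal partial sums by $K(x,x)$, Cauchy--Schwarz, Dini's theorem, identification of the series $\widetilde K(x,y) = \sum_n \lambda_n e_n(x) \overline{e_n(y)}$ with $K$ via almost-everywhere equality plus Lemma~\ref{lkernel240}, and the monotone convergence theorem for the trace formula. (Your appeal to ``Mercer's classical trick'' for the diagonal positivity is also the paper's step; it proves it by averaging, $(K-K_N)(x,x) = \lim_{r \downarrow 0} |B(x,r)|^{-2} ((T-T_N)\one_{B(x,r)}, \one_{B(x,r)}) \geq 0$.) However, there is one genuine logical flaw in the order in which you assemble the pieces: you invoke Dini's theorem to get locally uniform convergence of the increasing partial sums $s_N(x) = \sum_{n=1}^N \lambda_n |e_n(x)|^2$, and you justify the continuity of their limit \emph{by} Dini (``continuity of the limit being forced by local uniform convergence \dots via Dini's theorem''). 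This is circular: continuity of the pointwise limit is a \emph{hypothesis} of Dini's theorem, not a conclusion one can extract from it. At the stage where you apply it, all you know is $s_N(x) \leq K(x,x)$; an increasing limit of continuous functions bounded by a continuous function is in general only lower semicontinuous, and nothing you have established yet excludes a discontinuous limit. The continuity of $\sum_n \lambda_n |e_n(x)|^2$ becomes available only once you know this sum equals $K(x,x)$, i.e.\ only after the identification $\widetilde K = K$ --- which in your plan comes \emph{after} the uniform convergence it is supposed to produce.

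The repair is exactly the paper's ordering and needs no new ideas. From the pointwise bound $\sum_n \lambda_n |e_n(x)|^2 \leq K(x,x)$, Cauchy--Schwarz applied to tails shows that for each \emph{fixed} $x$ the series $\sum_n \lambda_n e_n(x) \overline{e_n(y)}$ converges uniformly in $y$ on compact subsets of $X$; hence $\widetilde K$ is separately continuous and bounded on compact subsets of $X \times X$, with no use of Dini. Your identification of $K(x,\cdot)$ with the $L_2$-sum $\sum_n \lambda_n e_n(x) \overline{e_n}$ for fixed $x \in \Omega$ is legitimate (point evaluation is bounded because $T L_2(\Omega) \subset C(X)$), and it gives $\widetilde K = K$ almost everywhere on $\Omega \times \Omega$; Lemma~\ref{lkernel240} then upgrades this to equality everywhere on $X \times X$. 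Only at this point is the diagonal sum continuous, being equal to $K(x,x)$, so Dini applies on each compact $F \subset X$, and a final Cauchy--Schwarz yields the locally uniform convergence of $\sum \lambda_n |e_n \otimes \overline{e_n}|$ on $X \times X$. The remaining parts of your argument (transfer of positivity from $\Omega$ to $X$ by continuity, monotone convergence for $\sum_n \lambda_n = \int_\Omega K(x,x)\,dx$) are correct as stated.
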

\begin{proof}
Since $T$ is a compact positive operator
there exists an orthonormal basis $(e_n)_{n \in \Ni}$ 
for $L_2(\Omega)$ of eigenfunctions of $T$.
For all $n \in \Ni$ let $\lambda_n \in [0,\infty)$ be such that 
$T e_n = \lambda_n \, e_n$.
Then $\lambda_n \, e_n = T e_n \in C(X)$
for all $n \in \Ni$, and in particular $e_n \in C(X)$ if $\lambda_n \neq 0$.
For all $N \in \Ni$ define $K_N \in C(X \times X)$
by 
\[
K_N(x,y) = \sum_{n = 1}^N \lambda_n \, e_n(x) \, \overline{ e_n(y) }
\]
and define $T_N \in \cl(L_2(\Omega))$ by 
\[
(T_N u)(x) = \int_\Omega K_N(x,y) \, u(y) \, dy
.  \]
Then $T_N u = \sum_{n=1}^N \lambda_n \, (u,e_n) \, e_n$
and $(T - T_N)(u) = \sum_{n=N+1}^\infty \lambda_n \, (u,e_n) \, e_n$
for all $u \in L_2(\Omega)$.
Hence $T - T_N$ is positive.
If $x \in \Omega$, then
\[
(K - K_N)(x,x)
= \lim_{r \downarrow 0} 
   \frac{1}{|B(x,r)|^2} \, ((T - T_N) \one_{B(x,r)}, \one_{B(x,r)})_{L_2(\Omega)}
\in [0,\infty)
\]
and therefore $K_N(x,x) \leq K(x,x)$.
By continuity 
\[
0 \leq 
\sum_{n=1}^N \lambda_n \, |e_n(x)|^2
= K_N(x,x) 
\leq K(x,x) 
\]
for all $x \in X$.
So the series $\sum \lambda_n \, |e_n|^2$ is pointwise convergent
and $\sum_{n=1}^\infty \lambda_n \, |e_n(x)|^2 \leq K(x,x)$ for all $x \in X$.

If $x,y \in X$, then 
\begin{eqnarray*}
\sum_{n=1}^\infty \lambda_n \, |e_n(x) \, \overline{ e_n(y) }|
& \leq & \Big( \sum_{n=1}^\infty \lambda_n \, |e_n(x)|^2 \Big)^{1/2}
     \Big( \sum_{n=1}^\infty \lambda_n \, |e_n(y)|^2 \Big)^{1/2}  \\
& \leq & K(x,x)^{1/2} \, K(y,y)^{1/2}
< \infty
.  
\end{eqnarray*}
Define $\widetilde K \colon X \times X \to \Ci$ by 
\[
\widetilde K(x,y)
= \sum_{n=1}^\infty \lambda_n \, e_n(x) \, \overline{ e_n(y) }
.  \]
Then $|\widetilde K(x,y)| \leq K(x,x)^{1/2} \, K(y,y)^{1/2}$ for all 
$(x,y) \in X \times X$, so $\widetilde K$ is bounded on compact subsets
of $X \times X$.
It will take quite some effort to show that $\widetilde K = K$.

Let $x \in X$.
Let $F \subset X$ be compact.
We shall show that the series $\sum \lambda_n \, e_n(x) \, \overline{e_n}$
converges uniformly on~$F$.
Let $\varepsilon > 0$.
There exists an $N \in \Ni$ such that 
$\sum_{n=N}^\infty \lambda_n \, |e_n(x)|^2 < \varepsilon^2$.
Then 
\[
\sum_{n=N}^\infty \Big| \lambda_n \, e_n(x) \, \overline{e_n(y)} \Big|
\leq \Big( \sum_{n=N}^\infty \lambda_n \, |e_n(x)|^2 \Big)^{1/2}
     \Big( \sum_{n=N}^\infty \lambda_n \, |e_n(y)|^2 \Big)^{1/2}
\leq \varepsilon \, K(y,y)^{1/2}
\]
for all $y \in F$.
So the series $\sum \lambda_n \, e_n(x) \, \overline{e_n}$
converges uniformly on $F$.
Consequently the function $\widetilde K(x,\cdot)$ is continuous on $F$ and then
also on $X$.
Similarly, the function $\widetilde K(\cdot,y)$ is continuous for all $y \in X$.
Therefore $\widetilde K$ is separately continuous.

Let $u \in C_c(\Omega)$.
Then for all $x \in X$ the series $\sum \lambda_n \, e_n(x) \, \overline{e_n} \, u$
is uniformly convergent on $\supp u$.
Hence 
\[
\int_\Omega \widetilde K(x,y) \, u(y) \, dy
= \sum_{n=1}^\infty \int_\Omega \lambda_n \, e_n(x) \, u(y) \, \overline{e_n(y)} \, dy
= \sum_{n=1}^\infty \lambda_n \, (u,e_n)_{L_2(\Omega)} \, e_n(x) 
\]
for all $x \in X$.
On the other hand, 
$T u = \sum_{n=1}^\infty \lambda_n \, (u,e_n)_{L_2(\Omega)} \, e_n$ in $L_2(\Omega)$,
so $(T u)(x) = \sum_{n=1}^\infty \lambda_n \, (u,e_n)_{L_2(\Omega)} \, e_n(x)$ 
for almost every $x \in \Omega$.
Also $(Tu)(x) = \int_\Omega K(x,y) \, u(y) \, dy$ for all $x \in X$.
Therefore 
\begin{equation}
\int_\Omega \widetilde K(x,y) \, u(y) \, dy
= \int_\Omega K(x,y) \, u(y) \, dy
\label{etkernel502;2}
\end{equation}
for almost every $x \in \Omega$.
Since $\widetilde K$ is bounded on compact subsets of $X \times X$ it follows
from the Lebesgue dominated convergence theorem that 
$x \mapsto \int_\Omega \widetilde K(x,y) \, u(y) \, dy$ is continuous on~$X$.
Hence (\ref{etkernel502;2}) is valid for all $x \in X$.
Now let $x \in X$.
Then (\ref{etkernel502;2}) implies that $\widetilde K(x,\cdot) = K(x,\cdot)$
almost everywhere on $\Omega$.
So by continuity one concludes that $\widetilde K(x,\cdot) = K(x,\cdot)$
pointwise on $X$, that is $\widetilde K(x,y) = K(x,y)$ for all $y \in X$.
Hence $K = \widetilde K$.

We proved that 
$\sum_{n=1}^\infty \lambda_n \, |e_n(x)|^2
= \widetilde K(x,x)
= K(x,x)$
for all $x \in X$.
So $\sum_{n=1}^\infty \lambda_n \, |e_n|^2$ is continuous by the 
assumption that $K$ is continuous.

Let $F \subset X$ be compact.
Then by Dini's theorem the series $\sum \lambda_n \, |e_n|^2$
converges uniformly on $F$.
Since 
\[
\sum_{n=N}^\infty \Big| \lambda_n \, e_n(x) \, \overline{e_n(y)} \Big|
\leq \Big( \sum_{n=N}^\infty \lambda_n \, |e_n(x)|^2 \Big)^{1/2}
     \Big( \sum_{n=1}^\infty \lambda_n \, |e_n(y)|^2 \Big)^{1/2}
\]
for all $x \in F$ and $y \in X$, the series 
$\sum \lambda_n \, |e_n \otimes \overline{e_n}|$
is uniformly convergent on compact subsets of $X \times X$.

Finally, the monotone convergence theorem gives
\[
\int_\Omega K(x,x) \, dx
= \int_\Omega \sum_{n=1}^\infty \lambda_n \, |e_n|^2
= \sum_{n=1}^\infty \int_\Omega \lambda_n \, |e_n|^2
= \sum_{n=1}^\infty \lambda_n
.  \]
So the operator $T$ is trace class if and only if $\int_\Omega K(x,x) \, dx < \infty$.
\end{proof}

\begin{cor} \label{ckernel504}
Let $\Omega \subset \Ri$ be open, non-empty and $X \subset \Ri^d$ a set such that 
$\Omega \subset X \subset \overline \Omega$.
Let $K \in C(X \times X)$
and suppose that $K|_{\Omega \times \Omega} \in L_2(\Omega \times \Omega)$.
Let $T \in \cl(L_2(\Omega))$ be the Hilbert--Schmidt operator
with kernel $K|_{\Omega \times \Omega}$. 
Suppose that $T$ is positive and $T L_2(\Omega) \subset C(X)$.

Then all conclusions of Theorem~\ref{tkernel502} are valid.
\end{cor}
\begin{proof}
Every Hilbert--Schmidt operator is compact.
Let $u \in C_c(X)$.
Then $(T u)(x) = \int_\Omega K(x,y) \, u(y) \, dy$ for almost 
every $x \in \Omega$, since $T$ is the Hilbert--Schmidt operator
with kernel $K|_{\Omega \times \Omega}$. 
But $T u$ is continuous on $\Omega$ by assumption and also 
$x \mapsto \int_\Omega K(x,y) \, u(y) \, dy$ is continuous by the Lebesgue 
dominated convergence theorem.
Therefore $(T u)(x) = \int_\Omega K(x,y) \, u(y) \, dy$ for all $x \in \Omega$
and the conditions of Theorem~\ref{tkernel502} are satisfied.
\end{proof}

\begin{cor} \label{ckernel503}
Let $\Omega \subset \Ri$ be open and bounded.
Let $K \in C_b(\Omega \times \Omega)$ and 
let $T \in \cl(L_2(\Omega))$ be the Hilbert--Schmidt operator
with kernel $K$. 
Suppose that $T$ is positive.
Then $T$ is trace class.
\end{cor}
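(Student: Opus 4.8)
The plan is to reduce to the variation of Mercer's theorem proved above, applying Corollary~\ref{ckernel504} with the choice $X = \Omega$. The point to bear in mind is that $K$ is only assumed continuous and bounded on the open set $\Omega \times \Omega$ and need not extend continuously to $\overline\Omega \times \overline\Omega$, so one must work with $X = \Omega$ rather than $X = \overline\Omega$. Since $\Omega$ is bounded it has finite Lebesgue measure, and the boundedness of $K$ then gives $K \in L_2(\Omega \times \Omega)$, consistent with $T$ being the Hilbert--Schmidt operator with kernel $K$; thus the hypotheses of Corollary~\ref{ckernel504} that concern $K$ are in place.

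The one substantive hypothesis still to be checked is $T L_2(\Omega) \subset C(\Omega)$, and I would verify it through Lemma~\ref{lkernel501} (again with $X = \Omega$). The representation $(Tu)(x) = \int_\Omega K(x,y)\,u(y)\,dy$ holds for almost every $x \in \Omega$ for every $u \in L_2(\Omega)$, in particular for every $u \in C_c(\Omega)$, because $T$ is by definition the Hilbert--Schmidt operator with kernel $K$; and $K \in C(\Omega \times \Omega)$. It therefore suffices to verify condition~\ref{lkernel501-2}, namely $\sup_{x \in F} \int_\Omega |K(x,y)|^2\,dy < \infty$ for every compact $F \subset \Omega$. But $\int_\Omega |K(x,y)|^2\,dy \le \|K\|_\infty^2\,|\Omega|$ uniformly in $x \in \Omega$, so the supremum is finite for every such $F$, and Lemma~\ref{lkernel501} yields $T L_2(\Omega) \subset C(\Omega)$.

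With this inclusion established and $T$ positive, all hypotheses of Corollary~\ref{ckernel504} are met, so its conclusions, those of Theorem~\ref{tkernel502}, hold. In particular $K(x,x) \ge 0$ for all $x \in \Omega$ and $\sum_{n=1}^\infty \lambda_n = \int_\Omega K(x,x)\,dx$, with $T$ trace class precisely when this integral is finite. Finiteness is immediate from the boundedness of $K$: one has $0 \le \int_\Omega K(x,x)\,dx \le \|K\|_\infty\,|\Omega| < \infty$, so $T$ is trace class. There is no real obstacle beyond organising this reduction; the only place demanding care is the choice $X = \Omega$ together with the routine verification of the uniform $L_2$-bound in Lemma~\ref{lkernel501}, both of which rest on the boundedness of $K$ and the finiteness of $|\Omega|$.
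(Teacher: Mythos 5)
Your proof is correct and is essentially the derivation the paper intends: the corollary is stated without proof precisely because it follows from Corollary~\ref{ckernel504} with $X=\Omega$, after checking $T L_2(\Omega)\subset C(\Omega)$ via Lemma~\ref{lkernel501} (using $\|K\|_\infty<\infty$ and $|\Omega|<\infty$) and then bounding $\int_\Omega K(x,x)\,dx\leq \|K\|_\infty\,|\Omega|$ in the trace formula of Theorem~\ref{tkernel502}. Your attention to taking $X=\Omega$ rather than $\overline\Omega$ is exactly the right point of care, since $K$ need not extend continuously to the closure.
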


We remark that the positivity of $T$ can be characterised if the kernel is continuous,
as is well known (cf.\ \cite{BCR} Chapter~3, Exercise~1.24).

\begin{lemma} \label{lkernel506}
Let $\Omega \subset \Ri$ be open,
let $K \in C(\Omega \times \Omega)$ and $T \in \cl(L_2(\Omega))$.
Suppose that for all $u \in C_c(\Omega)$ the equality
\[
(T u)(x) = \int_\Omega K(x,y) \, u(y) \, dy
\]
is valid for almost every $x \in \Omega$.
Then the following are equivalent.
\begin{tabeleq}
\item \label{lkernel506-1}
$T$ is positive.
\item \label{lkernel506-2}
$\sum_{k,l=1}^N c_k \, \overline{c_l} \, K(x_k,x_l) \geq 0$
for all $N \in \Ni$, $x_1,\ldots,x_N \in \Omega$ and $c_1,\ldots,c_N \in \Ci$.
\end{tabeleq}
\end{lemma}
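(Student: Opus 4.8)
The plan is to prove both implications by moving between the integral representation of $T$ and finite quadratic forms built from $K$, using the continuity of $K$ to control the limiting procedures.

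For `\ref{lkernel506-2}$\Rightarrow$\ref{lkernel506-1}' it suffices, by density of $C_c(\Omega)$ in $L_2(\Omega)$ and boundedness of $T$, to verify $(Tu,u)_{L_2(\Omega)}\ge 0$ for all $u\in C_c(\Omega)$. For such $u$ the hypothesis and Fubini's theorem give $(Tu,u)_{L_2(\Omega)}=\int_\Omega\int_\Omega K(x,y)\,u(y)\,\overline{u(x)}\,dx\,dy$, and the integrand is continuous with support contained in $\supp u\times\supp u$. Since $\supp u$ is a compact subset of the open set $\Omega$, I would fix $\delta>0$ so that the $\delta$-neighbourhood of $\supp u$ still lies in $\Omega$, partition a box containing $\supp u$ into cells $Q_1,\dots,Q_M$ of diameter less than $\delta$, and choose sample points $x_k\in Q_k\cap\Omega$. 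Setting $c_k=\overline{u(x_k)}\,|Q_k|$, the associated Riemann sum equals $\sum_{k,l}c_k\,\overline{c_l}\,K(x_k,x_l)$, which is nonnegative by \ref{lkernel506-2}. Letting the mesh tend to $0$ and invoking uniform continuity of the compactly supported integrand then gives $(Tu,u)_{L_2(\Omega)}\ge 0$.

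For `\ref{lkernel506-1}$\Rightarrow$\ref{lkernel506-2}' fix $x_1,\dots,x_N\in\Omega$ and $c_1,\dots,c_N\in\Ci$. I would choose $r>0$ small enough that the balls $B(x_k,r)$ lie in $\Omega$ and take nonnegative $\phi_{k,r}\in C_c(\Omega)$ supported in $B(x_k,r)$ with $\int_\Omega\phi_{k,r}=1$, i.e.\ an approximate identity at $x_k$. Put $u_r=\sum_{k=1}^N\overline{c_k}\,\phi_{k,r}\in C_c(\Omega)$. Positivity of $T$ yields $0\le(Tu_r,u_r)_{L_2(\Omega)}=\sum_{k,l}\overline{c_k}\,c_l\int_\Omega\int_\Omega K(x,y)\,\phi_{l,r}(x)\,\phi_{k,r}(y)\,dx\,dy$. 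As $r\downarrow 0$ each double integral converges to $K(x_l,x_k)$ by continuity of $K$, so the right-hand side tends to $\sum_{k,l}\overline{c_k}\,c_l\,K(x_l,x_k)=\sum_{k,l}c_k\,\overline{c_l}\,K(x_k,x_l)$ after relabelling, which is therefore nonnegative.

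The main obstacle is the bookkeeping in the first implication: ensuring the sample points $x_k$ can be taken in $\Omega$ (so that \ref{lkernel506-2} genuinely applies) and that the Riemann sums converge to the double integral. Both are handled by the compactness of $\supp u$ inside the open set $\Omega$ together with uniform continuity of the compactly supported integrand. The complex-conjugate indexing in the quadratic forms must be tracked carefully in both directions, but is otherwise routine.
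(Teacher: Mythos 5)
Your proof is correct. There is, however, nothing in the paper to compare it against: the authors state this lemma without proof, remarking only that it is well known and citing \cite{BCR}, Chapter~3, Exercise~1.24. Your argument is the standard one and is complete in both directions: for \ref{lkernel506-2}$\Rightarrow$\ref{lkernel506-1} you reduce to $u \in C_c(\Omega)$ by density and boundedness of $T$, write $(Tu,u)_{L_2(\Omega)}$ as a double integral of the continuous, compactly supported integrand $K(x,y)\,u(y)\,\overline{u(x)}$, and realise its Riemann sums as quadratic forms of type \ref{lkernel506-2}; the bookkeeping you flag is genuinely the only delicate point, and it works as you say, since cells not meeting $\supp u$ carry $c_k=0$ and so may be dropped from the quadratic form, while cells meeting $\supp u$ lie in $\Omega$ once the mesh is smaller than the distance from $\supp u$ to $\partial\Omega$. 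For \ref{lkernel506-1}$\Rightarrow$\ref{lkernel506-2} the approximate identities $\phi_{k,r}$ concentrating at the points $x_k$ give exactly the claimed limit by continuity of $K$, and your relabelling restores the stated form of the sum. So your proposal correctly supplies a proof the paper delegates to a reference.
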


Our arguments in the proof of Theorem~\ref{tkernel502} stem from the 
classical result where $K \in C(\overline \Omega \times \overline \Omega)$
and $\Omega$ is bounded, see for example Werner \cite{Werner} Satz~VI.4.2.
Theorem~\ref{tkernel502} is covered by \cite{SunMercer} Theorem~2, where 
reproducing kernel Hilbert spaces are used for the proof of identity~(\ref{etkernel502;2}),
but the trace formula is missing.
Ferreira, Menegatto and Oliveira use the same arguments as we for proving 
\cite{FerreiraMenegattoOliveira} Theorem~2.6, but the statement is different.

\section{Examples} \label{Skernel6}

In this section we give examples which illustrate our results.
Let $\Omega \subset \Ri^d$ be open connected and bounded.
Depending on the problem one might obtain kernels in 
$C(X \times X)$ for different choices of $X$ with 
$\Omega \subset X \subset \overline \Omega$.

The first example is with Neumann boundary conditions.

\begin{exam} \label{xkernel601}
Let $\Omega \subset \Ri^d$ be an open connected bounded set 
with continuous boundary.
Further let $\Gamma \subset \partial \Omega$ be a relatively open set
such that for all $z \in \Gamma$ there is an $r > 0$ such that 
$B(z,r) \cap \Gamma$ is a Lipschitz graph with $B(z,r) \cap \Omega$
on one side.
Consider the Neumann Laplacian $\Delta^N$ in $L_2(\Omega)$ and let 
$S$ be the $C_0$-semigroup generated by $\Delta^N$.
Choose $X = \Omega \cup \Gamma$.
Then $S_t$ is self-adjoint and $S_t L_2(\Omega) \subset C(X)$ 
for all $t > 0$ by \cite{ERe2} Lemmas~5.1 and~6.7.

Since $\Omega$ has continuous boundary, the operator $\Delta^N$ has compact resolvent.
Denote by $0 = \lambda_1 < \lambda_2 \leq \lambda_3 \leq \ldots$ the 
eigenvalues of $- \Delta^N$ repeated with multiplicity and 
by $(e_n)_{n \in \Ni}$ an orthonormal basis for $L_2(\Omega)$
satisfying $- \Delta^N e_n = \lambda_n \, e_n$ for all $n \in \Ni$.
We may choose $e_1 = \one_{\overline \Omega}$.
Then $e_n \in C(X)$ for all $n \in \Ni$.
It follows from Corollary~\ref{ckernel204.4} that 
$S_t = (S_{t/3})^3$ has a kernel $K_t \in C(X \times X)$ for all $t > 0$.
Moreover, Theorem~\ref{tkernel502} gives
\[
K_t(x,y) 
= \sum_{n=1}^\infty e^{-\lambda_n t} e_n(x) \, \overline{e_n(y)}
\]
for all $t > 0$ and $x,y \in X$.
Furthermore, for all $t > 0$ the series $\sum e^{-\lambda_n t} |e_n \otimes \overline{e_n}|$
converges uniformly on compact subsets of $X \times X$.
\end{exam}

\begin{exam} \label{xkernel602}
Let $\Omega \subset \Ri^d$ be an open connected bounded set.
Consider the Dirichlet Laplacian $\Delta^D$ in $L_2(\Omega)$.
Let $\Gamma$ be the set of all regular points in the sense of Wiener.
Choose $X = \Omega \cup \Gamma$.

We first show that 
\begin{equation}
\{ u \in D(\Delta^D) : \Delta^D u \in L_\infty(\Omega) \}
\subset C(X) .
\label{exkernel602;1}
\end{equation}
Let $u \in D(\Delta^D)$ and suppose that $f = \Delta^D u \in L_\infty(\Omega)$.
Clearly $u \in C(\Omega)$ by elliptic regularity. 
It sufficies to show that $\lim_{x \to z, \; x \in \Omega} u(x) = 0$
for all $z \in \Gamma$.
Denote by $E_d$ the Newtonian potential on $\Ri^d$ and write 
$w = E_d * \tilde f$, where $\tilde f \in L_\infty(\Ri^d)$ is the extension
of $f$ by~$0$.
Then $w \in C(\Ri^d) \cap H^1(\Ri^d)$.
Moreover, $\Delta(w|_\Omega) = f = \Delta u \in H^{-1}(\Omega)$ as distributions.
Write $h = w|_\Omega - u$ and $\varphi = w|_{\partial \Omega}$.
Then $h$ is the Perron solution of $\varphi$ by \cite{AD2} Theorem~1.1.
In particular $\lim_{x \to z, \; x \in \Omega} h(x) = \varphi(z)$
for all $z \in \Gamma$ since $z$ is a regular point.
Because $\varphi(z) = w(z)$ this implies that 
$\lim_{x \to z, \; x \in \Omega} u(x) = 0$ as required
and (\ref{exkernel602;1}) follows.

Let $S$ be the $C_0$-semigroup generated by $\Delta^D$.
Then $S_t$ is self-adjoint for all $t > 0$.
Let $t > 0$ and $u \in L_2(\Omega)$.
Then $S_t u \in D(\Delta_D)$ by holomorphy of the semigroup $S$ and 
$\Delta^D \, S_t u = S_{t/2} \Delta^D \, S_{t/2} u \in L_\infty(\Omega)$ by 
ultracontractivity of $S$.
Hence $S_t u \in C(X)$ by (\ref{exkernel602;1}).
Since $\Omega$ is bounded, the operator $\Delta^D$ has compact resolvent.
Denote by $0 < \lambda_1 < \lambda_2 \leq \lambda_3 \leq \ldots$ the 
eigenvalues of $- \Delta^D$ repeated with multiplicity and 
by $(e_n)_{n \in \Ni}$ an orthonormal basis for $L_2(\Omega)$
satisfying $- \Delta^D e_n = \lambda_n \, e_n$ for all $n \in \Ni$.
As in the previous example one deduces that $e_n \in C(X)$ for all $n \in \Ni$
and the operator
$S_t = (S_{t/3})^3$ has a kernel $K_t \in C(X \times X)$ for all $t > 0$.
Moreover, Theorem~\ref{tkernel502} implies that 
\[
K_t(x,y) 
= \sum_{n=1}^\infty e^{-\lambda_n t} e_n(x) \, \overline{e_n(y)}
\]
for all $t > 0$ and $x,y \in X$.
Finally, for all $t > 0$ the series $\sum e^{-\lambda_n t} |e_n \otimes \overline{e_n}|$
converges uniformly on compact subsets of $X \times X$.
\end{exam}

\subsection*{Acknowledgements}
The first-named author is most grateful for the hospitality extended
to him during a fruitful stay at the University of Auckland and the 
second-named author
for a wonderful stay at the University of Ulm.
This work is supported by the Marsden Fund Council from Government funding,
administered by the Royal Society of New Zealand.

\end{document}